\def\qed{\hfill\ifhmode\unskip\nobreak\fi\quad\ifmmode\Box\else\hfill$\Box$\fi}
\def\ite#1{\hfill\break${}$\hbox to 50pt {\quad(#1)\hfill}}
\newtheorem{thm}{Theorem}
\newtheorem{cor}[thm]{Corollary}
\newtheorem{rem}[thm]{Remark}
\newtheorem{lem}[thm]{Lemma}
\newtheorem{claim}[thm]{Claim}
\begin{document}

\title{\vspace{-0.5in}Extensions of a theorem of Erd\H{o}s on nonhamiltonian graphs\footnote{This paper started
at SQUARES meeting of the American Institute of Mathematics (April 2016).}}

\author{
{{Zolt\'an F\" uredi}}\thanks{
\footnotesize {Alfr\' ed R\' enyi Institute of Mathematics,  Hungary
E-mail:  \texttt{furedi.zoltan@renyi.mta.hu}.
{Research was supported in part by grant (no. K116769)
from the National Research, Development and Innovation Office NKFIH,
by the Simons Foundation Collaboration Grant \#317487,
and by the European Research Council Advanced Investigators Grant 267195.}
}}
\and
{{Alexandr Kostochka}}\thanks{
\footnotesize {University of Illinois at Urbana--Champaign, Urbana, IL 61801
 and Sobolev Institute of Mathematics, Novosibirsk 630090, Russia. E-mail: \texttt {kostochk@math.uiuc.edu}.
 Research of this author
is supported in part by NSF grant DMS-1600592
and grants 15-01-05867 and 16-01-00499  of the Russian Foundation for Basic Research.
}}
\and{{Ruth Luo}}\thanks{University of Illinois at Urbana--Champaign, Urbana, IL 61801, USA. E-mail: {\tt ruthluo2@illinois.edu}.}}

\date{March 29, 2017}

\maketitle

\vspace{-0.3in}

\begin{abstract}
%
%

Let $n, d$ be integers with $1 \leq d \leq \left \lfloor \frac{n-1}{2} \right \rfloor$, and set $h(n,d):={n-d \choose 2} + d^2$. 
Erd\H{o}s proved that when $n \geq 6d$, 
each  nonhamiltonian graph $G$ on $n$ vertices with minimum degree $\delta(G) \geq d$ has at most $h(n,d)$ edges. He also provides a sharpness example $H_{n,d}$ for all such pairs $n,d$. Previously, we showed a stability version of this result: for $n$ large enough, every nonhamiltonian graph $G$ on $n$ vertices with $\delta(G) \geq d$ and more than $h(n,d+1)$ edges is a subgraph of $H_{n,d}$.

In this paper, we show that not only does the graph $H_{n,d}$ maximize the number of edges among  nonhamiltonian graphs
with $n$ vertices and minimum degree at least $d$, but in fact it maximizes the number of copies of any fixed graph $F$ when $n$ is sufficiently large in comparison with $d$ and $|F|$. We also show a stronger  stability theorem, that is, we classify all nonhamiltonian $n$-graphs with $\delta(G) \geq d$ and more than $h(n,d+2)$ edges. We show this by proving a more general theorem: we describe all such graphs with more than ${n-(d+2) \choose k} + (d+2){d+2 \choose k-1}$ copies of $K_k$ for any $k$.
\medskip\noindent
{\bf{Mathematics Subject Classification:}} 05C35, 05C38.\\
{\bf{Keywords:}} Subgraph density, hamiltonian cycles, extremal graph theory.
\end{abstract}

\section{Introduction}

Let $V(G)$ denote the vertex set of a graph $G$, $E(G)$
denote the edge set of $G$, and $e(G)=|E(G)|$. Also, if $v\in V(G)$, then $N(v)$ is the neighborhood of $v$ and $d(v)=|N(v)|$.
If $v\in V(G)$ and $D\subset V(G)$ then for shortness we will write $D+v$ to denote $D\cup \{v\}$.
For $k,t \in \mathbb N$, $(k)_t$ denotes the falling factorial $  k(k-1) \ldots (k-t+1)=\frac{k!}{(k-t)!}$.

The first Tur\'{a}n-type result for nonhamiltonian graphs was due to Ore~\cite{Ore}:

\begin{thm}[Ore~\cite{Ore}]\label{Ore}
If $G$ is a nonhamiltonian graph on $n$ vertices, then $e(G) \leq {n-1 \choose 2} + 1$.
\end{thm}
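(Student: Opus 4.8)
The plan is to prove the contrapositive in a sharper form: every graph $G$ on $n\ge 3$ vertices with $e(G)\ge\binom{n-1}{2}+2$ is hamiltonian. Since adding edges preserves hamiltonicity and $K_n$ is hamiltonian, it is enough to rule out an \emph{edge-maximal} nonhamiltonian graph $G$ with $e(G)\ge\binom{n-1}{2}+2$; so I assume such a $G$ exists and look for a contradiction.

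The first step is to produce a Hamilton path whose endpoints are nonadjacent. As $G$ is nonhamiltonian we have $G\ne K_n$, so $G$ has a nonadjacent pair $x,y$; by maximality $G+xy$ is hamiltonian, and any Hamilton cycle of $G+xy$ must use the edge $xy$, so $G$ itself contains a Hamilton path from $x$ to $y$. Write this path as $u_1u_2\cdots u_n$ with $u_1=x$, $u_n=y$, and $u_1\not\sim u_n$.

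The second step is the usual rotation (``crossover'') estimate bounding $d(u_1)+d(u_n)$. Set $I=\{\,i:u_1u_i\in E(G)\,\}$ and $J=\{\,i:u_nu_{i-1}\in E(G)\,\}$; since $u_1\not\sim u_n$, both are subsets of $\{2,\dots,n\}$, and $|I|=d(u_1)$, $|J|=d(u_n)$. If some index $i$ belonged to $I\cap J$, then $u_1u_2\cdots u_{i-1}u_nu_{n-1}\cdots u_iu_1$ would be a Hamilton cycle of $G$, a contradiction; hence $I\cap J=\emptyset$, and as $I$ and $J$ are disjoint subsets of an $(n-1)$-element set, $d(u_1)+d(u_n)=|I|+|J|\le n-1$.

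The final step is to delete both endpoints. Since $u_1\not\sim u_n$, exactly $d(u_1)+d(u_n)$ edges are removed, so
\[
e(G-u_1-u_n)=e(G)-d(u_1)-d(u_n)\ge\binom{n-1}{2}+2-(n-1)=\binom{n-2}{2}+1,
\]
which is impossible, since $G-u_1-u_n$ has only $n-2$ vertices. This contradiction proves the theorem (the cases $n\le 2$ being trivial). I do not anticipate a genuine obstacle: the only step requiring care is the rotation estimate, where one must invoke $u_1\not\sim u_n$ to keep both $I$ and $J$ inside a common set of size $n-1$ — it is their disjointness \emph{within that set} that gives the clean bound $d(u_1)+d(u_n)\le n-1$ — after which everything reduces to the identity $\binom{n-1}{2}-(n-1)=\binom{n-2}{2}-1$.
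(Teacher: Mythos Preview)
Your proof is correct. The paper does not actually prove this theorem --- it is stated as a cited result of Ore --- so there is no ``paper's own proof'' to compare against. That said, the key inequality you derive via the rotation argument, $d(u_1)+d(u_n)\le n-1$ for a nonedge $u_1u_n$ in a saturated graph, is precisely the degree condition~\eqref{D1} that the paper extracts from Ore's work and uses repeatedly; your argument is the classical one, carried through cleanly with the vertex-deletion count at the end.
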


This bound is achieved only for the $n$-vertex graph obtained from the complete graph $K_{n-1}$ by adding a vertex of degree $1$.
Erd\H{o}s ~\cite{Erdos}  refined the bound in terms of the minimum degree of the graph:

\begin{thm}[Erd\H{o}s~\cite{Erdos}]\label{Erdos} Let $n, d$ be integers with $1 \leq d \leq \left \lfloor \frac{n-1}{2} \right \rfloor$, and set $h(n,d):={n-d \choose 2} + d^2$.
If $G$ is a nonhamiltonian graph on $n$ vertices with minimum degree $\delta(G) \geq d$, then
     \[e(G) \leq \max\left\{ h(n,d),h(n, \left \lfloor \frac{n-1}{2} \right \rfloor)\right\}=:e(n,d).\]
This bound is sharp for all $1\leq d\leq \left \lfloor \frac{n-1}{2} \right \rfloor$.
\end{thm}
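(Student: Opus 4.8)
The plan is to read the bound off the degree sequence of $G$ and then optimize. Since adding an edge to a nonhamiltonian $n$-vertex graph keeps it on $n$ vertices, does not decrease $\delta$, and increases $e(G)$, I would first reduce to the case where $G$ is \emph{edge-maximal} nonhamiltonian: $G+xy$ is hamiltonian for every nonadjacent pair $x,y$, equivalently $G$ has a hamiltonian $x$–$y$ path for each such pair. (This reduction is not strictly necessary if one simply quotes Chv\'atal's hamiltonian degree-sequence theorem, but it is what makes a self-contained argument go through.)

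Next, write the degree sequence as $d_1\le d_2\le\cdots\le d_n$, so $d_1=\delta(G)\ge d$, and $d_1<n/2$ by Dirac's theorem. The key step is to produce an index $i$ with $d\le i<n/2$ such that $d_i\le i$ and $d_{n-i}\le n-i-1$. This is exactly the contrapositive of Chv\'atal's criterion; alternatively it follows from a P\'osa-type rotation–extension argument applied to a longest path of the edge-maximal graph $G$, and this is where essentially all of the combinatorial work lies. The lower bound $i\ge d$ is automatic, since $i\ge d_i\ge d_1\ge d$.

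Given such an $i$, I would split the degree sequence into three blocks: the $i$ smallest degrees are each at most $i$; the $i$ largest are each at most $n-1$; and the remaining $n-2i$ middle degrees are each at most $d_{n-i}\le n-i-1$. Summing,
\[
2e(G)=\sum_{j=1}^{n}d_j\le i\cdot i+(n-2i)(n-i-1)+i(n-1)=(n-i)(n-i-1)+2i^2=2h(n,i),
\]
so $e(G)\le h(n,i)$. Finally, $h(n,i+1)-h(n,i)=3i+2-n$ is strictly increasing in $i$, so $i\mapsto h(n,i)$ is convex and on the range $d\le i\le\lfloor (n-1)/2\rfloor$ attains its maximum at an endpoint; hence $e(G)\le h(n,i)\le\max\{h(n,d),h(n,\lfloor(n-1)/2\rfloor)\}=e(n,d)$.

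For sharpness I would exhibit $H_{n,d}$: a clique on a vertex set $A$ with $|A|=n-d$, a fixed subset $D\subseteq A$ with $|D|=d$, and $d$ further vertices each joined to exactly $D$. Then $\delta(H_{n,d})=d$ (using $d\le\lfloor(n-1)/2\rfloor$), $e(H_{n,d})=\binom{n-d}{2}+d^2=h(n,d)$, and $H_{n,d}$ is nonhamiltonian since deleting $D$ (a set of $d$ vertices) leaves $d+1$ components. The same construction with $d$ replaced by $\lfloor(n-1)/2\rfloor$ (still $\ge d$, hence admissible) realizes the value $h(n,\lfloor(n-1)/2\rfloor)$, so $e(n,d)$ is attained for every $d$ in range. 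The one genuinely hard ingredient is the existence of the index $i$; once that structural fact about nonhamiltonian graphs is in place, the extremal bound follows from a routine count and the convexity of $h(n,\cdot)$.
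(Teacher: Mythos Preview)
Your argument is correct. The paper does not give a separate proof of Theorem~\ref{Erdos}; it is cited as a known result, and the paper instead proves its generalization Theorem~\ref{Erdos_k}, whose $k=2$ case is exactly Erd\H{o}s' theorem. Comparing your proposal to that proof: both reach the inequality $e(G)\le h(n,r)$ for some $d\le r\le\lfloor(n-1)/2\rfloor$ and then finish by convexity of $h(n,\cdot)$, so the optimization step is identical. The difference is in how the structural parameter $r$ is obtained and how the edge bound is derived. The paper uses only P\'osa's theorem (Theorem~\ref{Posa}) to get a set $D$ of $r$ vertices each of degree at most $r$, and then counts edges directly: at most $\binom{n-r}{2}$ edges lie in $G-D$ and each vertex of $D$ is in at most $r$ edges, giving $e(G)\le\binom{n-r}{2}+r^2=h(n,r)$. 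You instead invoke the stronger Chv\'atal degree-sequence criterion to get an index $i$ with both $d_i\le i$ and $d_{n-i}\le n-i-1$, and then sum degrees in three blocks. Your route is essentially Erd\H{o}s' original one and has the virtue that the bound $2e(G)\le 2h(n,i)$ is obtained with equality in the algebra; the paper's route is a bit more economical in that P\'osa's theorem is a weaker (and easier to prove) input than Chv\'atal's, and the edge count avoids any degree-sequence bookkeeping. Either way, the sharpness construction $H_{n,d}$ is the same.
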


To show the sharpness of the bound, for $n,d \in \mathbb N$ with $d \leq \left \lfloor \frac{n-1}{2} \right \rfloor$,
consider the graph $H_{n,d}$  obtained from a copy of $K_{n-d}$, say with vertex set $A$,
 by adding $d$ vertices of degree $d$ each
of which is adjacent to the same $d$ vertices in $A$.
An example of $H_{11,3}$ is on the left of Fig~\ref{fig0}.


\begin{figure}[!ht]\label{fig0}
 \centering
\includegraphics[scale=.56]{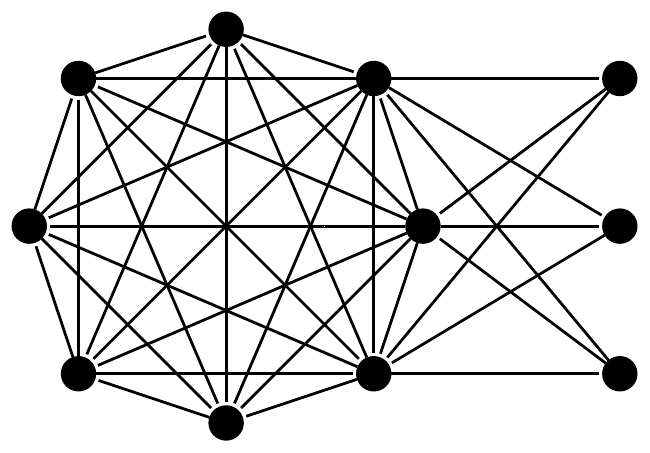}\;\;\;\hspace{2cm} \includegraphics[scale=.56]{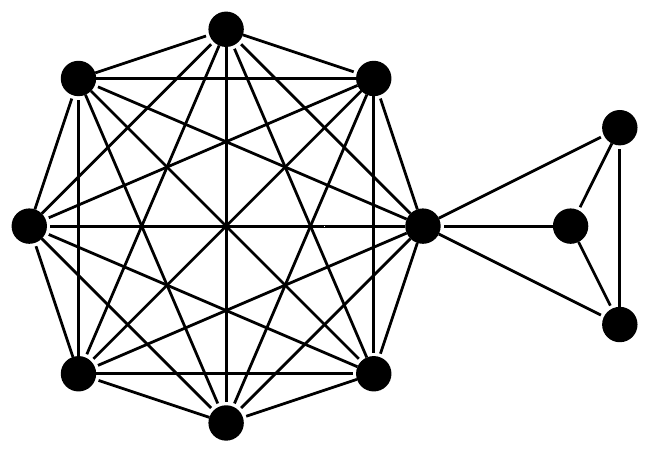}
\caption{Graphs $H_{11,3}$ (left) and $K'_{11,3}$ (right).}
\end{figure}

By construction, $H_{n,d}$ has minimum degree $d$, is nonhamiltonian, and $e(H_{n,d}) = {n-d \choose 2} + d^2 = h(n,d)$.
Elementary calculation shows that $h(n,d)> h(n,\left \lfloor \frac{n-1}{2} \right \rfloor)$ in the range
  $1\leq d\leq \left \lfloor \frac{n-1}{2} \right \rfloor$ if and only if
  $d<(n+1)/6$ and $n$ is odd or  $d<(n+4)/6$ and $n$ is even.
Hence there exists a $d_0:=d_0(n)$ such that
\begin{equation*}
   e(n,1)> e(n, 2)> \dots >e(n,d_0)=e(n, d_0+1)=\dots = e(n,\left \lfloor \frac{n-1}{2} \right \rfloor),
  \end{equation*}
  where $d_0(n):= \left \lceil \frac{n+1}{6} \right \rceil$ if $n$ is odd, and
  $d_0(n):= \left \lceil \frac{n+4}{6} \right \rceil$ if $n$ is even. Therefore $H_{n,d}$ is an extremal example of Theorem \ref{Erdos} when $d< d_0$ and $H_{n, \lfloor (n-1)/2 \rfloor}$ when $d \geq d_0$.

In~\cite{lining} and independently in~\cite{oldmain} a stability theorem for nonhamiltonian graphs with prescribed minimum degree
was proved.
Let $K'_{n,d}$ denote the edge-disjoint union of $K_{n-d}$ and $K_{d+1}$ sharing a single vertex.
An example of $K'_{11,3}$ is on the right of Fig~\ref{fig0}.

\begin{thm}[\cite{lining,oldmain}]\label{oldma}
Let $n\geq 3$ and $d\leq \left \lfloor \frac{n-1}{2} \right \rfloor$.
Suppose that $G$ is an $n$-vertex  nonhamiltonian graph  with minimum degree $\delta(G) \geq d$ such that
\begin{equation}
    e(G) > e(n,d+1)= \max\left\{h(n,d+1), h(n, \left\lfloor \frac{n-1}{2}\right\rfloor)\right\}.
\end{equation}
Then $G$ is a subgraph of either $H_{n,d}$ or $K'_{n,d}$.
\end{thm}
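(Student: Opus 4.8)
The plan is to reduce to the edge-maximal case and then split according to the connectivity of $G$.

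\emph{Reductions.} First I would assume $G$ is \emph{saturated} (a maximal nonhamiltonian graph on $V(G)$); this is harmless, since adding edges preserves $\delta(G)\ge d$ and increases $e(G)$, so $G$ may be replaced by a saturated supergraph $G'\supseteq G$ on the same vertex set, and $G'\subseteq H_{n,d}$ (resp.\ $G'\subseteq K'_{n,d}$) gives $G\subseteq H_{n,d}$ (resp.\ $G\subseteq K'_{n,d}$). For saturated $G$, each non-edge $uv$ has $G+uv$ hamiltonian, so $G$ has a hamiltonian $u$--$v$ path, and the usual exchange argument forces $d(u)+d(v)\le n-1$; in particular $G$ is connected. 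Also, if $d<\lfloor\frac{n-1}{2}\rfloor$ and $\delta(G)\ge d+1$, then Theorem~\ref{Erdos} applied with minimum degree $d+1$ gives $e(G)\le e(n,d+1)$, contradicting the hypothesis $e(G)>e(n,d+1)$; so $\delta(G)=d$, and the remaining value $d=\lfloor\frac{n-1}{2}\rfloor$ makes the hypothesis unsatisfiable in the same way and is dismissed at once.

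\emph{The cut-vertex case.} Suppose $G$ has a cut vertex $x$. If $G-x$ had at least three components, an edge added between two of them would leave $G$ disconnected, hence nonhamiltonian --- contradicting saturation; so $G-x$ has exactly two components $C_1,C_2$. If some $C_i$ contained a non-edge $uw$, then $x$ would remain a cut vertex of $G+uw$, again impossible; so each $C_i$ is a clique, and for the same reason $x$ is complete to $C_1\cup C_2$. Thus $G$ is the union of two cliques of orders $a$ and $b$ sharing $x$, with $a+b=n+1$, and $\delta(G)=\min\{a,b\}-1\ge d$, so $\min\{a,b\}\ge d+1$. If $\min\{a,b\}\ge d+2$, then by convexity of $t\mapsto\binom{t}{2}+\binom{n+1-t}{2}$ one gets $e(G)=\binom{a}{2}+\binom{b}{2}\le\binom{n-d-1}{2}+\binom{d+2}{2}\le h(n,d+1)\le e(n,d+1)$, contradicting the hypothesis. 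Hence $\min\{a,b\}=d+1$ and $G=K'_{n,d}$.

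\emph{The $2$-connected case} (the heart of the proof). If $d=1$ this is vacuous, since $2$-connectivity gives $\delta(G)\ge2$ and then $e(G)>e(n,2)$ contradicts Theorem~\ref{Erdos}; so assume $d\ge2$. Here I would follow the method underlying Kopylov's bound on the number of edges of a $2$-connected graph with bounded circumference. Take a longest cycle $C$ with $|C|=c<n$; the components of $G-V(C)$ and their neighborhoods on $C$ satisfy the standard longest-cycle constraint (a path through such a component joining two of its attachments on $C$ is no longer than either $C$-arc between them), and Kopylov's ``disintegration'' --- iteratively deleting vertices of degree below $\lceil c/2\rceil$ --- reduces $G$ to an essentially complete core. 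Pushing this analysis through the \emph{near-extremal} regime $e(G)>e(n,d+1)$ should show that $G$ is a spanning subgraph of $H_{n,t}$ for some $t$ (or of one of the few sparser $2$-connected nonhamiltonian near-extremal graphs that the argument allows); the sparser ones have minimum degree too small to satisfy $\delta(G)=d$ and are excluded, so $G\subseteq H_{n,t}$. Finally, the $t$ vertices of $H_{n,t}$ outside its clique $A$ have degree exactly $t$, so $\delta(G)\ge d$ gives $t\ge d$; while $e(G)\le e(H_{n,t})=h(n,t)$ together with $e(G)>e(n,d+1)=\max\{h(n,d+1),h(n,\lfloor\frac{n-1}{2}\rfloor)\}$ and the strict convexity of $t\mapsto h(n,t)$ force $t\le d$ (and a contradiction when $d\ge d_0$, so that range is vacuous here). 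Hence $t=d$ and $G\subseteq H_{n,d}$.

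The hard part is this $2$-connected case: one must upgrade Kopylov's \emph{extremal} edge bound to a \emph{stability} statement identifying every $2$-connected nonhamiltonian graph whose edge count lies in the narrow window just below the Erd\H{o}s bound as a spanning subgraph of some $H_{n,t}$ (or of a short list of exceptions to be killed by the minimum-degree hypothesis), which means controlling the structure of $G$ throughout the disintegration rather than merely counting edges at the end. The reductions and the cut-vertex case are comparatively routine.
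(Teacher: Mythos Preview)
Your reductions and the cut-vertex case are correct and cleanly argued. The $2$-connected case, however, is not a proof but a program: you say that ``pushing this analysis through'' \emph{should} yield $G\subseteq H_{n,t}$, and you explicitly concede that one must first ``upgrade Kopylov's extremal edge bound to a stability statement.'' That upgrade is itself a theorem of comparable depth to what you are proving; you have identified the right target but not supplied the argument, so as written there is a genuine gap.

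The paper's route is entirely different and avoids both the connectivity split and any Kopylov-type machinery. (Theorem~\ref{oldma} is quoted from earlier work, but its proof is contained in the present paper as the $k=2$ case of Theorem~\ref{oldma2}.) After passing to a saturated $G$, one invokes P\'osa's theorem: there is some $r\le\lfloor(n-1)/2\rfloor$ and $r$ vertices of degree at most $r$. Take the \emph{maximum} such $r$ and let $D$ be that set. The non-edge degree bound $d(u)+d(v)\le n-1$, together with the maximality of $r$ and the hypothesis $e(G)>h(n,\lfloor(n-1)/2\rfloor)$, forces $G-D$ to be complete (Lemma~\ref{g-d}). Then $e(G)\le h(n,r)$, and convexity of $x\mapsto h(n,x)$ plus $e(G)>e(n,d+1)$ pin down $r=d=\delta(G)$. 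A short case analysis (Lemma~\ref{hnd}) then shows that a saturated graph with $r=\delta(G)$ and $G-D$ complete is exactly $H_{n,d}$ or $K'_{n,d}$.

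So the paper never separates a cut-vertex case from a $2$-connected case: the single structural fact ``$G-D$ is a clique on $n-d$ vertices'' handles both at once, and your unfinished Kopylov step is simply not needed. Your approach is viable in principle (stability versions of Kopylov's theorem do exist), but it is strictly more work than the P\'osa-based argument here.
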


One of the main results of this paper shows that when $n$ is large enough with respect to $d$ and $t$,  
$H_{n,d}$ not only has the most edges among $n$-vertex nonhamiltonian graphs
with minimum degree at least $d$, but also has the most  copies of \emph{any} $t$-vertex graph.
This is an instance of a generalization of the Tur\'an problem called {\em subgraph density problem}:
 for $n \in \mathbb N$ and graphs $T$ and $H$, let $ex(n,T,H)$ denote the maximum possible number of (unlabeled)
 copies of $T$ in an $n$-vertex $H$-free graph. When $T = K_2$, we have the usual extremal number $ex(n,T,H) = ex(n, H)$.

Some notable results on the function $ex(n,T,H)$  for various combinations of $T$ and $H$ were obtained in~\cite{Erdos2,BG, alonsh, Grz, raz,FO}.
In particular, Erd\H os~\cite{Erdos2} determined $ex(n,K_s,K_t)$,  Bollob\' as and Gy\H ori~\cite{BG} found the order of magnitude
of  $ex(n,C_3,C_5)$, Alon and Shikhelman~\cite{alonsh} presented a series of bounds on $ex(n,T,H)$ for different classes of $T$ and $H$.

In this paper, we study the maximum number of  copies of $T$ in nonhamiltonian $n$-vertex graphs, i.e. $ex(n,T,C_n)$.
For two graphs $G$ and $T$, let $N(G,T)$ denote the number of {\em labeled} copies of $T$ that are subgraphs of $G$, i.e., the
number of injections $\phi: V(T)\to V(G)$ such that for each $xy\in E(T)$, $\phi(x)\phi(y)\in E(G)$.
Since for every $T$ and $H$, $|Aut (T)|\,ex(n,T,H)$ is the maximum of $N(G,T)$ over the $n$-vertex graphs $G$ not containing $H$,
some of our results are in the language of labeled copies of $T$ in $G$.
For $k \in \mathbb N$, let $N_k(G)$ denote the number of unlabeled copies of $K_k$'s in $G$. Since $|Aut (K_k)|=k!$,
 we have $N_k(G) = N(G,K_k)/k!$.

\section{Results}\label{res}

As an extension of Theorem \ref{Erdos}, we show that for each fixed graph $F$ and any $d$, if $n$ is large enough with respect to $|V(F)|$ and $d$,
then among all $n$-vertex nonhamiltonian graphs with minimum degree at least $d$, $H_{n,d}$ contains the maximum number of copies of $F$.


\begin{thm}\label{sub}For every graph $F$ with $t:=|V(F)|\geq 3$, any $d \in \mathbb N$, and any $n \geq n_0(d,t):= 4dt+3d^2 + 5t$,
if $G$ is an $n$-vertex nonhamiltonian graph with minimum degree $\delta(G) \geq d$, then $N(G,H) \leq N(H_{n,d}, F)$.
\end{thm}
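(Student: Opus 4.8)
The plan is to fix an $n$-vertex nonhamiltonian graph $G$ with $\delta(G)\ge d$ that maximizes $N(G,F)$ and to prove $N(G,F)\le N(H_{n,d},F)$ (this suffices, as $H_{n,d}$ is itself such a graph). We may assume $F$ has an edge, since otherwise $N(G,F)=(n)_t=N(H_{n,d},F)$. Since adding an edge to a nonhamiltonian graph cannot decrease $N(\cdot,F)$ or the minimum degree, we may assume $G$ is \emph{edge-maximal} nonhamiltonian; set $\delta:=\delta(G)\ge d$, so $\delta\le\lfloor(n-1)/2\rfloor$ by Dirac's theorem and $e(G)\le h(n,d)$ by Theorem~\ref{Erdos} (with $n\ge n_0$ large enough that $e(n,\delta')=h(n,\delta')$ in the relevant range). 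The benchmark is $N(H_{n,d},F)\ge (n-d)_t$ (embeddings into the big clique of $H_{n,d}$); since a direct computation with the explicit graphs gives $N(H_{n,\delta'},F)\le N(H_{n,d},F)$ for all $d\le\delta'\le\lfloor(n-1)/2\rfloor$, it is enough to bound $N(G,F)$ by $N(H_{n,\delta},F)$.

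\emph{Case 1: $\delta<\lfloor(n-1)/2\rfloor$ and $e(G)>h(n,\delta+1)$.} Theorem~\ref{oldma}, applied with $\delta$ in place of $d$, gives $G\subseteq H_{n,\delta}$ or $G\subseteq K'_{n,\delta}$; the first possibility finishes Case 1. For the second it suffices to show $N(K'_{n,\delta},F)\le N(H_{n,\delta},F)$. Classify an embedding $\varphi\colon V(F)\to V(K'_{n,\delta})$ by $W:=\varphi^{-1}(B')$, where $B'$ is the set of $\delta$ vertices of the attached $K_{\delta+1}$ other than the cut-vertex $a^\ast$: every $F$-neighbor of a vertex of $W$ is forced into $W\cup\{\varphi^{-1}(a^\ast)\}$, so each component of $F$ meeting $W$ is embedded inside $B'\cup\{a^\ast\}$, while the rest of $F$ goes into the big clique. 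Re-routing $W$ into the independent set $D$ of $H_{n,\delta}$ and using that each vertex of $D$ has a $\delta$-element common neighborhood $S$ inside the big clique, whereas each vertex of $B'$ sees only $a^\ast$ there, one obtains an injection from embeddings of $F$ into $K'_{n,\delta}$ to embeddings into $H_{n,\delta}$, proving the inequality.

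\emph{Case 2: $\delta=\lfloor(n-1)/2\rfloor$, or else $e(G)\le h(n,\delta+1)$.} Fix $v$ with $d(v)=\delta$; by edge-maximality and the Bondy--Chv\'atal condition, $d(u)\le n-1-\delta$ for every non-neighbor $u$ of $v$, so the set $A^\ast$ of vertices of degree $>n-1-\delta$ lies in $N(v)$ and has at most $\delta$ elements, $B:=V(G)\setminus A^\ast$ consists of vertices adjacent to at most $n-1-\delta$ vertices of $B$, and $\omega(G)\le n-\delta$. Counting embeddings of $F$ by the number $i$ sent into $A^\ast$, one with $i$ vertices in $A^\ast$ is counted at most $\binom{t}{i}(\delta)_i$ times a product over the components of $F$ of factors bounded by the above (the $i=0$ term being at most $(n-\delta)^{r}(n-1-\delta)^{t-r}$, $r$ the number of components of $F$); when $e(G)$ is small enough this already falls below $(n-d)_t$. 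When $e(G)$ is close to $h(n,d)$ (so $G$ is dense and $\delta$ is near $d$), one instead uses edge-maximality and nonhamiltonicity to show the non-edges of $G$ are concentrated on a bounded vertex set, placing $G$ in a short explicit list of near-extremal graphs, and then compares $N(G,F)$ term-by-term with the exact value of $N(H_{n,\delta},F)$ (including its correction from embeddings meeting $D$). This dense regime is the crux: there the benchmark $(n-d)_t$ is essentially tight, no lower-order loss is affordable, and naive counting of embeddings using a high-degree vertex of $A^\ast$ together with an $F$-neighbor in $B$ gives only the trivial $n^t$; the hypothesis $n\ge 4dt+3d^2+5t$ is exactly what makes the crude count win in the sparse range while keeping the finite list of near-extremal graphs checkable in the dense range.
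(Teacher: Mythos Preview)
Your proposal is not a proof; Case 2---which you yourself call ``the crux''---is an outline that names the difficulty without resolving it. You assert that when $e(G)$ is close to $h(n,d)$ the non-edges are ``concentrated on a bounded vertex set, placing $G$ in a short explicit list of near-extremal graphs,'' but no such list is produced and no counting is done. In fact the paper does \emph{not} proceed this way: it never splits by $e(G)$, and there is no finite list beyond the single case $r=d$. Instead the paper first shows (from $N(G,F)>N(H_{n,d},F)\ge (n-d)_t$) that $e(G)>h(n,\lfloor(n-1)/2\rfloor)$, then invokes P\'osa/Lemma~\ref{g-d} to get a set $R$ of $r$ vertices of degree $\le r$ with $d\le r\le d_0-1$. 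For $r=d$, Lemma~\ref{hnd} gives $G\in\{H_{n,d},K'_{n,d}\}$ and Lemma~\ref{subk} finishes. For $r\ge d+1$ the key idea is to bound embeddings touching $R$ by classifying them according to a \emph{maximal independent} subset $I\subseteq\varphi^{-1}(R)$ of $F$; since each vertex of $R$ has degree $\le r$, this yields $|\Phi(I)|\le (r)_{|I|}\,r\,(n-r)_{t-|I|-1}$, summed to a closed form $f(r)$ that is shown to be convex on $[d+1,(n+3)/6]$ and checked to be $<1$ at both endpoints using $n\ge 4dt+3d^2+5t$. Your Case 2 contains no analogue of this mechanism.

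Two further gaps. First, the ``injection'' in Case 1 is wrong as described: you reroute $W=\varphi^{-1}(B')$ into the independent set $D$ of $H_{n,\delta}$, but $B'$ is a clique while $D$ is independent, so edges of $F$ inside $W$ are lost. The paper's Lemma~\ref{subk} does prove $N(K'_{n,d},F)\le N(H_{n,d},F)$, but by an inequality $|\Phi(C)|\le|\Psi(C)|$ (not an injection), with $C$ mapped into $D\cup N(D)$ rather than $D$, and the argument genuinely uses $n\ge 2dt+d+t$; note also that you would need this with $\delta$ in place of $d$, which your hypotheses do not give for large $\delta$. Second, your reduction ``it is enough to bound $N(G,F)$ by $N(H_{n,\delta},F)$'' presupposes $N(H_{n,\delta'},F)\le N(H_{n,d},F)$ for all $\delta'\ge d$, which you dismiss as a ``direct computation.'' This is exactly the content that fails for small $n$ (see Remark~\ref{rem13} for stars) and is nontrivial for general $F$; the paper never isolates it, since the $f(r)<1$ bound already handles $H_{n,r}$ along with every other $G$ having a P\'osa set of size $r$.
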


On the other hand,  if  $F$ is a star $K_{1,t-1}$ and $n \leq d t - d$, then $H_{n,d}$ does not maximize $N(G, F)$.
At the end of Section~\ref{gen} we show that in this case, $N(H_{n,\lfloor (n-1)/2 \rfloor}, F) > N(H_{n,d}, F)$.
 So, the bound on  $n_0(d,t)$ in Theorem~\ref{sub} has the right order of magnitude when $d=O(t)$.

An immediate corollary of Theorem~\ref{sub} is the following generalization of Theorem~\ref{Ore}

\begin{cor}\label{subO} For every graph $F$ with $t:=|V(F)|\geq 3$ and any $n \geq n_0(t):= 9t+3$,
if $G$ is an $n$-vertex nonhamiltonian graph,  then $N(G,H) \leq N(H_{n,1}, F)$.
\end{cor}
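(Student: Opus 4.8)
The plan is to obtain this as the special case $d=1$ of Theorem~\ref{sub}, after checking that the quantitative hypotheses line up and after covering the one situation that Theorem~\ref{sub} does not address directly, namely a nonhamiltonian graph with an isolated vertex.

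First I would note that $n_0(1,t)=4t+3+5t=9t+3=n_0(t)$, so the admissible ranges of $n$ in the two statements coincide. Thus, whenever $G$ is an $n$-vertex nonhamiltonian graph with $\delta(G)\ge 1$, Theorem~\ref{sub} applied with $d=1$ yields $N(G,F)\le N(H_{n,1},F)$, which is exactly the desired conclusion.

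It then remains to handle graphs $G$ with $\delta(G)=0$. I would first clear away two trivial subcases: if $F$ has no edge then $N(G,F)=(n)_t=N(H_{n,1},F)$ for every $n$-vertex graph, and if $G$ has no edge then $N(G,F)=0$; in either subcase the inequality is immediate. Otherwise $G$ has an edge, so I would fix a vertex $u_0$ of positive degree in $G$ and form $G'$ from $G$ by joining $u_0$ to every isolated vertex of $G$. Then $G'$ has $n$ vertices and $\delta(G')\ge 1$, and $G'$ is still nonhamiltonian, since each vertex that was isolated in $G$ has degree exactly $1$ in $G'$ and hence lies on no cycle, in particular on no Hamilton cycle. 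As $G\subseteq G'$, the previous paragraph applied to $G'$ gives $N(G,F)\le N(G',F)\le N(H_{n,1},F)$, completing the argument.

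I do not expect any genuine obstacle here: all of the substance is contained in Theorem~\ref{sub}, and the only point needing a (completely routine) extra step is the passage from $\delta(G)\ge 1$ to the general case, handled above by adding edges at isolated vertices, an operation that trivially preserves non-hamiltonicity and only increases $N(\cdot,F)$.
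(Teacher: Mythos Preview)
Your argument is correct and matches the paper's intent: the corollary is stated there as ``an immediate corollary'' of Theorem~\ref{sub}, i.e.\ the case $d=1$, with $n_0(1,t)=4t+3+5t=9t+3$. Your extra step covering $\delta(G)=0$ by attaching isolated vertices to a fixed positive-degree vertex is a clean way to close a point the paper leaves implicit (the proof of Theorem~\ref{sub} contains the throwaway remark ``The case where $G$ has isolated vertices can be handled by induction on the number of isolated vertices'', which morally addresses exactly this issue).
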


We consider the case that $F$ is a clique in more detail. For $n,k \in \mathbb N$, define on the interval $[1, \lfloor(n-1)/2 \rfloor]$ the function
\begin{equation}
h_k(n,x) := {n - x \choose k} + x{x \choose k-1}.
\end{equation}

We use the convention that for $a \in \mathbb R$, $b \in \mathbb N$, ${a \choose b}$ is the polynomial $\frac{1}{b!} a \times (a-1) \times \ldots \times (a - b + 1)$ if $a \geq b-1$  and $0$ otherwise. 

By considering the second derivative, one can check that  for any fixed $k$ and $n$, as a function of $x$, $h_k(n,x)$ is convex on $[1, \lfloor(n-1)/2 \rfloor]$, hence it attains its maximum at one of the endpoints, $x =1$ or $x = \lfloor (n-1)/2 \rfloor$. When $k=2$, $h_2(n,x) = h(n,x)$. We prove the following generalization of Theorem \ref{Erdos}.

\begin{thm}\label{Erdos_k} Let $n, d, k$ be integers with $1 \leq d \leq \left \lfloor \frac{n-1}{2} \right \rfloor$ and $k \geq 2$.
If $G$ is a nonhamiltonian graph on $n$ vertices with minimum degree $\delta(G) \geq d$, then the number $N_k(G)$ of
$k$-cliques in $G$ satisfies
     \[N_k(G) \leq \max\left\{ h_k(n,d),h_k(n, \left \lfloor \frac{n-1}{2} \right \rfloor)\right\}\]
\end{thm}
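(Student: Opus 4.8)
The plan is to reduce the problem to the structural information already available from the edge-extremal theory. First I would dispose of the easy direction: both $H_{n,d}$ and $H_{n,\lfloor(n-1)/2\rfloor}$ are nonhamiltonian with minimum degree at least $d$, and a direct count gives $N_k(H_{n,x}) = h_k(n,x)$ — the clique $K_{n-x}$ on $A$ contributes $\binom{n-x}{k}$ copies of $K_k$, while every $K_k$ using at least one of the $x$ added vertices lies inside the clique on those $x$ vertices together with their common neighborhood $D$ of size $x$ in $A$, and counting $K_k$'s in $K_{2x}$ that meet the first $x$-set gives the term $x\binom{x}{k-1}$ after reorganizing (more carefully: $\binom{2x}{k}-\binom{x}{k}$, but since this graph is really $K_x \vee \overline{K}_{...}$ one must be slightly careful; in any case the clean identity to verify is $N_k(H_{n,x})=h_k(n,x)$). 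So the maximum over our family is at least the claimed bound; the content is the upper bound.

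For the upper bound, the natural approach is induction on $k$, or more efficiently a direct argument that leverages Theorem \ref{Erdos} (the $k=2$ case) together with the Kruskal–Katona-type trade-off between edge count and clique count. Concretely, I would first observe that it suffices to prove the bound for nonhamiltonian graphs $G$ that are \emph{edge-maximal} nonhamiltonian (adding any edge creates a Hamiltonian cycle) with $\delta(G)\ge d$, since adding edges only increases $N_k$. For such $G$, Ore's classical closure argument shows that for every pair of nonadjacent vertices $u,v$ we have $d(u)+d(v)\le n-1$; this is the key leverage. Then I would split on whether $G$ has a dominating-type vertex structure: either $G$ has a vertex set behaving like the "$A$" in $H_{n,d}$, or the degree sequence is constrained enough (via $d(u)+d(v)\le n-1$ for nonadjacent pairs) to bound the number of $K_k$'s through each vertex. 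The cleanest route may be to mimic Erdős's original proof of Theorem \ref{Erdos}: take a vertex $v$ of minimum degree $\delta:=\delta(G)\ge d$, use nonhamiltonicity to find structure around $N(v)$, and bound $N_k(G)$ by $N_k(G-S)+ (\text{cliques meeting } S)$ for an appropriate small set $S$; then the convexity of $h_k(n,x)$ in $x$ (already noted in the excerpt) forces the extremum to the endpoints $x=\delta$ or $x=\lfloor(n-1)/2\rfloor$.

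The main obstacle I anticipate is the counting step that replaces Erdős's edge-count estimate: in the $k=2$ proof one bounds $e(G)$ by summing degrees, but for $k$-cliques one needs to control how cliques distribute across the partition of $V(G)$ induced by the nonhamiltonicity structure (the "core" clique versus the low-degree part). I expect one must show that every $K_k$ in $G$ either lies entirely in a clique-like set $A$ of size $n-x$ for some $x\ge d$, or uses at least one low-degree vertex, each of which has at most $x$ neighbors — and that the common neighborhood of all low-degree vertices has size at most $x$. Establishing this "common neighborhood" claim rigorously — essentially that $G$ is a subgraph of $H_{n,x}$ once we are in the relevant regime, which is where Theorem \ref{oldma} or its proof technique enters — is the crux; once it is in hand, $N_k(G)\le N_k(H_{n,x})=h_k(n,x)\le \max\{h_k(n,d),h_k(n,\lfloor(n-1)/2\rfloor)\}$ by convexity, completing the proof. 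A subtlety to handle with care is the degenerate range where $n-x<k$ or $x<k-1$, where the convention on $\binom{a}{b}$ makes several terms vanish; these cases should be checked separately but are routine.
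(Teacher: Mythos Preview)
Your proposal misidentifies the crux and, as written, would not go through. You frame the heart of the argument as showing that the low-degree vertices have a \emph{common} neighborhood of size at most $x$ --- ``essentially that $G$ is a subgraph of $H_{n,x}$'' --- and you point to Theorem~\ref{oldma} as the tool. But that structural claim is both unnecessary and false at this level of generality: $K'_{n,d}$ is a saturated nonhamiltonian graph in which the low-degree vertices certainly do not share a common neighborhood, and Theorem~\ref{oldma} only applies above the threshold $e(n,d+1)$, which you have no reason to assume here. So the route you sketch stalls exactly at the step you flag as the ``main obstacle''.

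The actual proof is far simpler and needs none of this machinery --- no saturation, no Ore closure, no Kruskal--Katona, no stability theorem. P\'osa's theorem (Theorem~\ref{Posa}) gives, for some $r$ with $d\le r\le \lfloor(n-1)/2\rfloor$, a set $D$ of $r$ vertices each of degree at most $r$. Now count $K_k$'s directly: those avoiding $D$ lie in $G-D$, a graph on $n-r$ vertices, so there are at most $\binom{n-r}{k}$ of them; any $K_k$ containing a fixed $v\in D$ uses $k-1$ vertices from $N(v)$, hence there are at most $\binom{r}{k-1}$ of those. Summing, $N_k(G)\le \binom{n-r}{k}+r\binom{r}{k-1}=h_k(n,r)$, and convexity of $h_k(n,\cdot)$ on $[d,\lfloor(n-1)/2\rfloor]$ finishes. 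The point you missed is that bounding cliques through a single vertex only requires that vertex's degree, not any shared-neighborhood structure among the low-degree vertices.
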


Again, graphs $H_{n,d}$ and $H_{n,\lfloor(n-1)/2\rfloor}$ are sharpness examples for the theorem.

Finally, we present a stability version of Theorem \ref{Erdos_k}. To state the result, we first define the family of extremal graphs.

Fix $d \leq \lfloor (n-1)/2 \rfloor$. In addition to graphs $H_{n,d}$ and $K'_{n,d}$ defined above, define $H'_{n,d}$:
$V(H'_{n,d}) = A \cup B$, where $A$ induces a complete graph on $n-d-1$ vertices, $B$ is a set of $d+1$ vertices  that
induce exactly one edge, and there exists a set of vertices $\{a_1, \ldots , a_d\} \subseteq A$ such that for all $b \in B$, $N(b)-B = \{a_1, \ldots , a_d\}$.
 Note that contracting the edge in $H'_{n,d}[B]$ yields $H_{n-1,d}$. These graphs are illustrated in Fig.~\ref{fig1}

\begin{figure}[!ht]\label{fig1}
  \centering
\includegraphics[scale=.5]{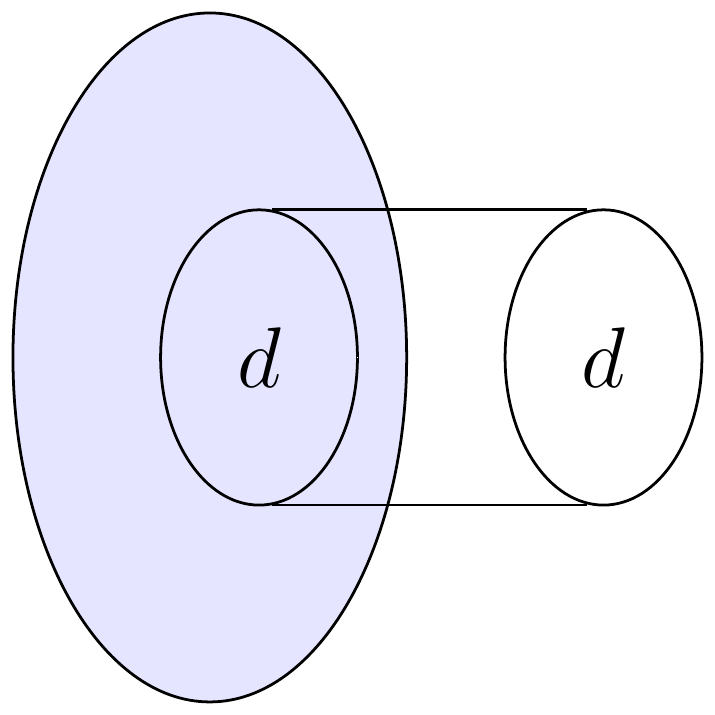}\;\;
\includegraphics[scale=.5]{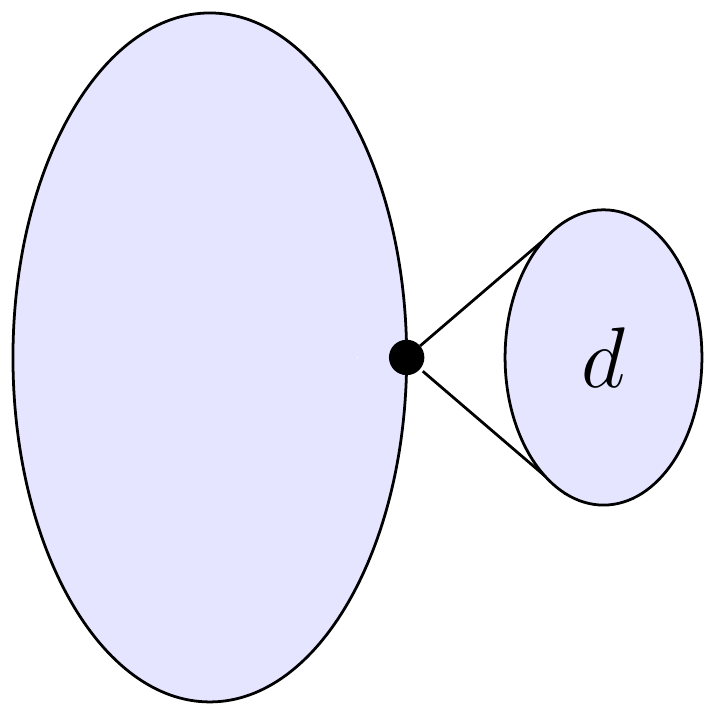}\;\;
\includegraphics[scale=.5]{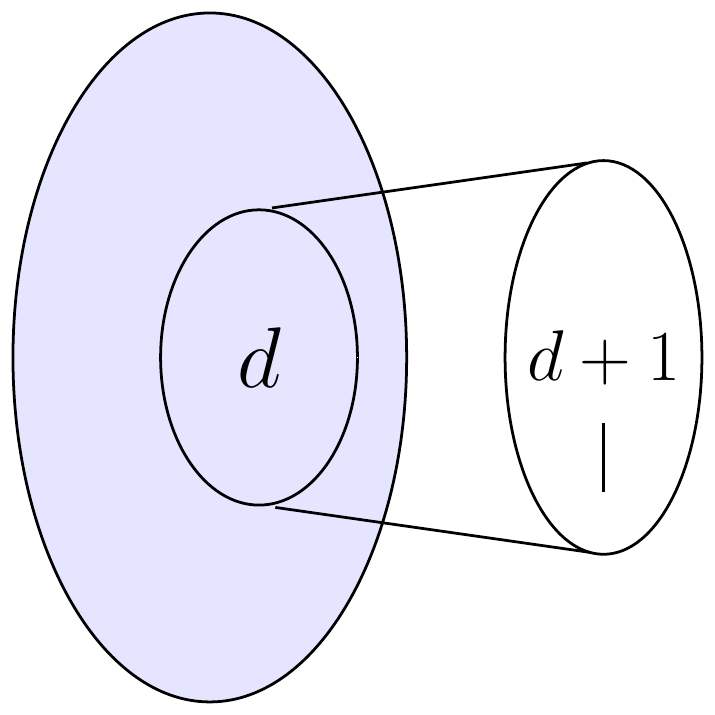}
  \caption{\footnotesize Graphs $H_{n,d}$ (left), $K'_{n,d}$ (center), and $H'_{n,d}$ (right), where shaded background indicates a complete graph.}
\end{figure}

We also have two more extremal graphs for the cases $d = 2$ or $d = 3$.
Define the nonhamiltonian  $n$-vertex
 graph $ G'_{n,2}$ with minimum degree $2$ as follows:  $V(G'_{n,2}) = A \cup B$ where $A$ induces a clique or order $n-3$, $B = \{b_1,b_2,b_3\}$ is an independent set of order $3$, and there exists $\{a_1,a_2,a_3, x\} \subseteq A$ such that $N(b_i) = \{a_i, x\}$ for $i\in \{1,2,3\}$ (see the graph on the left in Fig.~3).

 The nonhamiltonian  $n$-vertex
 graph $ F_{n,3}$ with minimum degree $3$ has  vertex set $ A \cup B$, where $A$ induces a clique of order $n-4$, $B$ induces a perfect matching on 4 vertices, and each of the vertices in $B$ is adjacent to the same two vertices in $A$ (see the graph on the right in Fig.~3).

\begin{figure}[!ht]\label{fig3}
  \centering
      \includegraphics[width=0.15\textwidth]{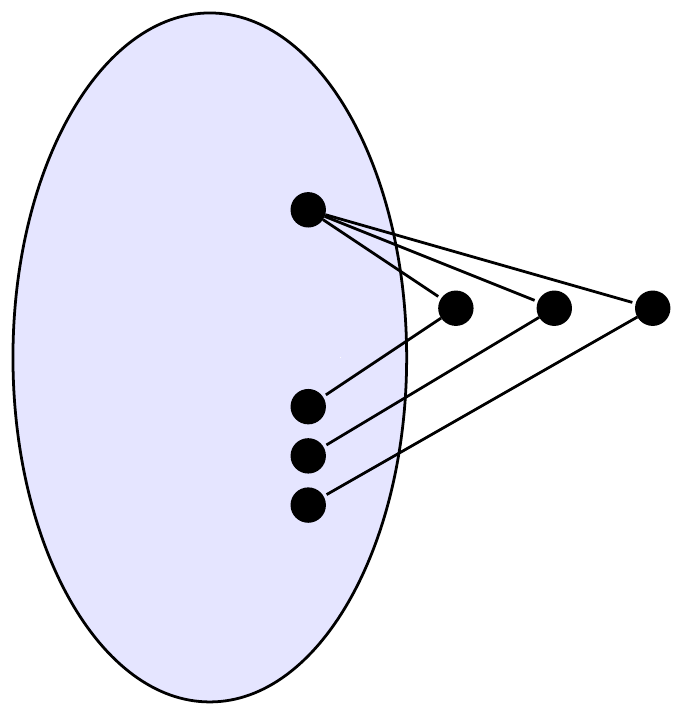}\;\;\;\;\;\;\;
      \includegraphics[width=0.15\textwidth]{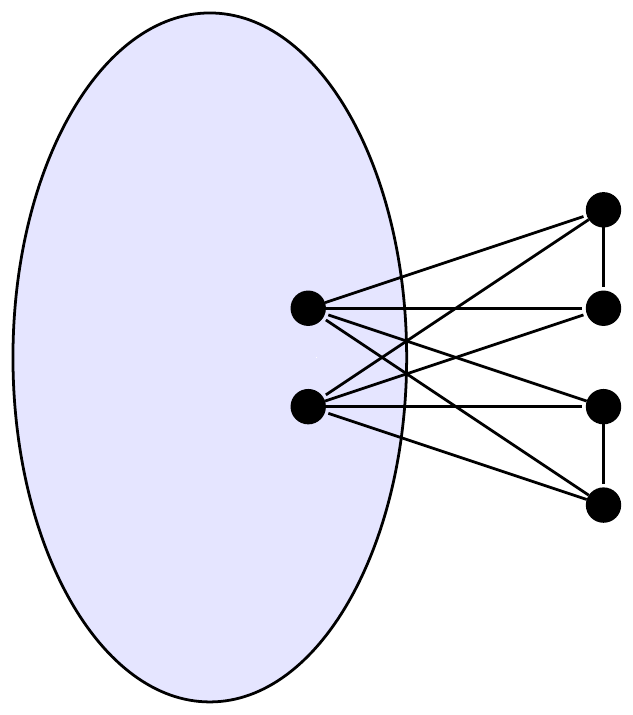}
  \caption{\footnotesize Graphs $ G'_{n,2}$ (left) and $F_{n,3}$ (right).}
\end{figure}

Our stability result is the following:

\begin{thm}\label{newma}
Let $n\geq 3$ and $1 \leq d\leq \left \lfloor \frac{n-1}{2} \right \rfloor$.
Suppose that $G$ is an $n$-vertex  nonhamiltonian graph  with minimum degree $\delta(G) \geq d$ such that there exists  $k \geq 2$ for which
\begin{equation}\label{equ2}
    N_k(G) > \max\left\{ h_k(n,d+2), h_k(n, \left\lfloor \frac{n-1}{2}\right\rfloor)\right\}.
\end{equation}
Let $\mathcal{H}_{n,d}:=\{H_{n,d}, H_{n,d+1}, K'_{n,d}, K'_{n,d+1}, H'_{n,d}\}$.
 \\
(i) If $d=2$, then  $G$ is a subgraph of  $ G'_{n,2}$ or of a graph in $\mathcal{H}_{n,2}$;\\
(ii) if $d=3$, then  $G$ is a subgraph of  $ F_{n,3}$ or of a graph in $\mathcal{H}_{n,3}$;\\
(iii) if $d=1$ or $4\leq d\leq \left \lfloor \frac{n-1}{2} \right \rfloor$, then  $G$ is a subgraph  of a graph in $\mathcal{H}_{n,d}$.
\end{thm}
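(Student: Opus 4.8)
\textbf{Proof proposal for Theorem \ref{newma}.}

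The plan is to reduce the statement to a careful analysis of the structure around a maximum-length cycle, following the classical Ore/Erd\H{o}s rotation technique, but pushing the bookkeeping one step further than in Theorem \ref{oldma}. Let $G$ be as in the hypothesis, and let $C$ be a longest cycle in $G$; since $G$ is nonhamiltonian, $|V(C)| \le n-1$, so there is a vertex outside $C$. Pick a vertex $v \notin V(C)$ in a component $D$ of $G - V(C)$. The standard argument shows that the set of neighbors of $v$ on $C$, together with the ``successors'' (in a fixed orientation of $C$) of these neighbors, form an independent set avoiding $v$, and more generally one obtains a large independent set $I$ of size roughly $\delta(G)+1 \ge d+1$ together with a clique-like structure on the rest. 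The key quantitative input is that $N_k(G)$ is large: writing $n - |I| = n - s$ with $s = |I|$, the number of $k$-cliques is at most $\binom{n-s}{k}$ plus the cliques using vertices of $I$, and each vertex of $I$ has degree at most \dots in fact exactly we must track how many $k$-cliques pass through $I$. Comparing with $h_k(n,d+2)$ forces $s \le d+2$, and then $\delta(G) \ge d$ forces $s \in \{d, d+1, d+2\}$ and pins down the neighborhoods of the independent-set vertices.

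First I would establish the ``dichotomy lemma'': either $G \subseteq H_{n,m}$ or $G \subseteq K'_{n,m}$ for the appropriate $m$, OR $G - V(C)$ has a component with at least two vertices, OR two vertices outside $C$ lie in different components. In the first case we are immediately done (with $m \in \{d, d+1\}$ determined by $\delta$ and the clique count via $h_k$ being decreasing in its second argument on the relevant range, exactly as the convexity remark after $h_k$ shows). In the remaining cases I would show the independent (or near-independent) set $I$ obtained from the rotation argument has $|I| \ge d+1$, and that all but at most a bounded number of vertices of $G$ form a clique $A$ with $|A| = n - |I|$; the vertices of $I$ attach to a common set of $\le |I|$ vertices of $A$ (this is where the longest-cycle/rotation argument does its work — distinct chords would create a longer cycle). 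Plugging the resulting structure into the clique count and comparing to the threshold $\max\{h_k(n,d+2), h_k(n,\lfloor (n-1)/2\rfloor)\}$ eliminates all configurations except $|I| \in \{d+1, d+2, d+3\}$ with very rigid attachment, which will be exactly the graphs $H_{n,d}, H_{n,d+1}, K'_{n,d}, K'_{n,d+1}, H'_{n,d}$, and — when $d$ is small, so that a few extra edges inside the ``independent'' part $B$ do not yet push the clique count over threshold — the sporadic graphs $G'_{n,2}$ (for $d=2$, $B$ an independent triple attached in a near-book configuration) and $F_{n,3}$ (for $d=3$, $B$ a matching on $4$ vertices). The case split on $d$ arises precisely because for $k$-clique counts the ``slack'' between $h_k(n,d+2)$ and the count of a candidate graph shrinks as $d$ grows, so only $d \le 3$ admits the extra sporadic families.

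The main obstacle I anticipate is the clique-counting step when $|I|$ (equivalently, the number of low-degree vertices) is exactly $d+2$ or $d+3$ and the attachment set in $A$ has size close to $|I|$: here the difference $N_k(G) - h_k(n,d+2)$ is a difference of binomial-type expressions that is small and not obviously signed, so one must expand $\binom{n-x}{k}$ carefully as a polynomial in $x$ and use $n$ large (though the theorem is stated for all $n \ge 3$, so in fact one needs the estimates to be exact, not asymptotic — this forces genuinely tight inequalities rather than ``$n$ sufficiently large'' arguments). A secondary subtlety is handling edges \emph{inside} $B$: adding $j$ edges to $B$ creates new $k$-cliques (at least $\binom{|B|}{\cdot}$-many through each new edge together with the common neighborhood in $A$), and one must check that for $d \ge 4$ even a single such edge overshoots $h_k(n,d+2)$, whereas for $d = 2,3$ a matching is tolerated, yielding exactly $G'_{n,2}$ and $F_{n,3}$; getting the constant right here, uniformly in $k$, is the delicate part. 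Finally, one must verify that the listed graphs genuinely satisfy \eqref{equ2} for \emph{some} $k$ (e.g.\ $k=2$), so that the classification is not vacuous, and that contracting the edge of $H'_{n,d}[B]$ giving $H_{n-1,d}$ is used only as a sanity check, not in the proof proper.
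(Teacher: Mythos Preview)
Your proposal has a genuine gap in where the work actually lies. The clique-count hypothesis \eqref{equ2} is used in the paper only \emph{once}, and only to bound the size of the low-degree set: after passing to a saturated (edge-maximal nonhamiltonian) supergraph, P\'osa's theorem together with Ore's degree condition~\eqref{D1} yields a set $D$ of $r$ vertices of degree $\le r$ with $G-D$ complete (Lemma~\ref{g-d}); convexity of $h_k(n,\cdot)$ then forces $r\in\{d,d+1\}$. From that point on the argument is entirely structural --- no further clique counting. One analyzes the set $S$ of vertices in $G-D$ adjacent to all of $D$, shows $1\le|S|\le d-3$ or $|S|\in\{d-2,d-1,d\}$ forces one of the listed graphs, and in every remaining case exhibits a system of paths covering $D$ with endpoints in $G-D$, which (since $G-D$ is a clique) extends to a hamiltonian cycle. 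Your plan to ``plug the resulting structure into the clique count and compare to the threshold'' to eliminate configurations is the wrong mechanism: the configurations you need to rule out are not ruled out because their clique count is too small, but because they are \emph{hamiltonian}.

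Two concrete symptoms of this: (a) Your claim that ``for $d\ge 4$ even a single such edge [inside $B$] overshoots $h_k(n,d+2)$'' is false --- the graph $H'_{n,d}$, which has exactly one edge inside the small side and is on the list for \emph{every} $d$, shows this. Edges inside $D$ are not excluded by counting; they are handled by finding hamiltonian paths through $D$. (b) You never pass to a saturated graph, so you have no access to~\eqref{D1}; without it the longest-cycle rotation argument gives you an independent set of $\ge d+1$ low-degree vertices, but not that the remaining $n-d-1$ vertices induce a clique, which is the backbone of every subsequent step. Your proposed ``dichotomy lemma'' is trying to recover this, but as stated it does not follow from the rotation argument alone. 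The missing idea is precisely Lemmas~\ref{g-d}--\ref{hnd} plus the path-covering Lemmas~\ref{paths1}--\ref{open}, after which the proof is a (long but elementary) case analysis on $|S|$ and on open/half-open/closed vertices in $D$.
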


The result is sharp because $H_{n,d+2}$ has $h_k(n,d+2)$ copies of $K_k$, minimum degree $d+2 > d$,  is nonhamiltonian and is not contained in
any graph in $\mathcal{H}_{n,d} \cup \{G'_{n,2}, F_{n,3}\}$.

The outline for the rest of the paper is as follows: in Section 3 we present some structural results for graphs that are edge-maximal nonhamiltonian
to be used in the proofs of the main theorems, in Section 4 we prove Theorem \ref{sub}, in Section 5 we prove Theorem \ref{Erdos_k} and give a
cliques version of Theorem \ref{oldma}, and in Section 6 we prove Theorem \ref{newma}.

\section{Structural results for saturated graphs}

We will use a classical theorem of P\'osa (usually stated as its contrapositive).

\begin{thm}[P\'osa~\cite{Posa}]\label{Posa} Let $n\geq 3$.
If $G$ is a nonhamiltonian $n$-vertex graph, then there exists  $1\leq k \leq \left \lfloor \frac{n-1}{2} \right \rfloor$
such that $G$ has a set of $k$ vertices with degree at most $k$. \end{thm}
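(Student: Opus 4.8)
The plan is to prove the contrapositive in the following sharpened form: if $G$ is an $n$-vertex graph ($n\geq 3$) in which, for every $k$ with $1\leq k\leq \lfloor (n-1)/2\rfloor$, fewer than $k$ vertices have degree at most $k$ (equivalently, the nondecreasing degree sequence $d_1\leq\cdots\leq d_n$ satisfies $d_k\geq k+1$ throughout this range), then $G$ is hamiltonian. First I would observe that this hypothesis is preserved under adding edges, since sorting can only increase each $d_k$; hence if some graph satisfying it were nonhamiltonian, there would be an edge-maximal nonhamiltonian graph $G^{*}$ on the same vertex set still satisfying it. I would then record three facts about $G^{*}$: it is not complete (as $K_n$ is hamiltonian for $n\geq 3$), so a nonadjacent pair exists; for every nonedge $uv$ the graph $G^{*}+uv$ is hamiltonian, and deleting $uv$ from such a Hamilton cycle yields a Hamilton path of $G^{*}$ between $u$ and $v$; and $\delta(G^{*})\geq 2$, which is exactly the hypothesis applied to $k=1$ (legitimate since $\lfloor (n-1)/2\rfloor\geq 1$ for $n\geq 3$).

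The core step is an extremal choice together with an Ore-type ``crossing'' observation. I would pick a nonadjacent pair $u,v$ \emph{maximizing} $d(u)+d(v)$, label a Hamilton path $v_1 v_2\cdots v_n$ with $v_1=u$ and $v_n=v$, and set $h:=\min\{d(u),d(v)\}$, say $h=d(u)\geq 2$. The crossing observation is that there is no index $j$ with both $v_1v_j\in E(G^{*})$ and $v_{j-1}v_n\in E(G^{*})$: such a pair of chords would produce the Hamilton cycle $v_1 v_2\cdots v_{j-1} v_n v_{n-1}\cdots v_j v_1$, contradicting nonhamiltonicity. Letting $j$ range over the positions of the $h$ neighbors of $v_1$ (all lying in $\{2,\dots,n-1\}$, since $v_1v_n\notin E(G^{*})$) shows, on one hand, that the $h$ predecessors $\{v_{j-1}:v_1v_j\in E(G^{*})\}$ are all distinct from and nonadjacent to $v_n=v$; on the other hand, the standard consequence $d(u)+d(v)\leq n-1$ forces $2h\leq n-1$, i.e. $h\leq \lfloor (n-1)/2\rfloor$, placing $h$ in the required range.

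Finally I would cash in the extremal choice of the pair. Each predecessor $v_{j-1}$ is a vertex distinct from $v$ and nonadjacent to $v$, so by maximality of $d(u)+d(v)$ we get $d(v_{j-1})+d(v)\leq d(u)+d(v)$, that is, $d(v_{j-1})\leq d(u)=h$. Since $j\mapsto v_{j-1}$ is injective, this produces $h$ distinct vertices each of degree at most $h$, with $2\leq h\leq \lfloor (n-1)/2\rfloor$; hence $d_h\leq h$, contradicting the standing hypothesis $d_h\geq h+1$ for $G^{*}$. This contradiction shows the assumed nonhamiltonian example cannot exist, proving the contrapositive. The hard part will be arranging the bookkeeping so that a \emph{single} counting simultaneously delivers the low-degree set and forces the index $h$ into $[1,\lfloor(n-1)/2\rfloor]$: this is precisely what the extremal choice of the nonadjacent pair buys, and the whole argument hinges on verifying that this choice is legitimate (the pair exists, the predecessors are genuine non-neighbors of $v$, and the degree hypothesis really does transfer to the edge-maximal graph $G^{*}$).
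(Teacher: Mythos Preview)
The paper does not prove this statement; it is quoted as a known result of P\'osa and used as a black box. Your argument is correct and is essentially the classical proof: pass to an edge-maximal nonhamiltonian supergraph, pick a nonadjacent pair $u,v$ with $d(u)+d(v)$ maximal, take the Hamilton $u$--$v$ path guaranteed by maximality, and use the standard ``no crossing chords'' observation to produce $h=\min\{d(u),d(v)\}$ predecessors that are non-neighbors of $v$; the extremal choice then bounds each of their degrees by $h$, while the same crossing count gives $d(u)+d(v)\le n-1$ and hence $h\le\lfloor(n-1)/2\rfloor$. All the bookkeeping you flag (existence of the pair, injectivity of $j\mapsto v_{j-1}$, the predecessors lying in $\{v_1,\dots,v_{n-2}\}$ and being genuine non-neighbors of $v$, the degree hypothesis surviving the passage to $G^{*}$) checks out. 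One tiny remark: you do not actually need the preliminary observation $\delta(G^{*})\ge 2$, since $h\ge 1$ already follows from $u$ having at least the neighbor $v_2$ on the path; but invoking the $k=1$ case of the hypothesis does no harm.
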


Call a graph $G$  {\it saturated} if $G$ is nonhamiltonian but for each $uv \notin E(G)$, $G + uv$ has a hamiltonian cycle.
Ore's proof \cite{Ore} of Dirac's Theorem~\cite{Dirac} yields that
\begin{equation}\label{D1}
d(u) + d(v) \leq n-1
\end{equation}
{\em for every $n$-vertex saturated graph $G$ and for each $uv \notin E(G)$.}

We will also need two structural results for saturated graphs which are easy extensions of Lemmas 6 and 7 in \cite{oldmain}.
\begin{lem}\label{g-d} Let $G$ be a saturated $n$-vertex  graph with $N_k(G) > h_k(n, \left \lfloor \frac{n-1}{2} \right \rfloor)$ for any $k\geq2$.
Then for some $1\leq r \leq \left\lfloor \frac{n-1}{2} \right\rfloor$, $V(G)$ contains a subset $D$ of $r$ vertices of degree at most $r$
such that $G - D$ is a complete graph.
\end{lem}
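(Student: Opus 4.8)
The plan is to combine P\'osa's theorem with the degree restriction \eqref{D1} and the hypothesis $N_k(G) > h_k(n, \lfloor (n-1)/2\rfloor)$ to pin down the structure. By Theorem~\ref{Posa}, since $G$ is nonhamiltonian there is an integer $1 \le r \le \lfloor (n-1)/2\rfloor$ and a set $D \subseteq V(G)$ with $|D| = r$ such that every vertex of $D$ has degree at most $r$. Choose $r$ to be the \emph{largest} such integer and, among all choices, fix such a $D$ (this maximality will be used to force completeness of $G-D$). Write $A := V(G)\setminus D$, so $|A| = n-r$. The goal is to show $G[A]$ is complete.

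The first key step is to show that any two nonadjacent vertices $u,v \in A$ would have to satisfy $d(u)+d(v)\le n-1$ by \eqref{D1} (here we use that $G$ is saturated), but since each has at most $r$ neighbours in $D$ we would get a pair of vertices in $A$ each with relatively few neighbours in $A$; iterating, I would try to locate a set of $r+1$ vertices each of degree at most $r+1$, contradicting the maximality of $r$ — unless $r = \lfloor (n-1)/2\rfloor$. The second key step handles the boundary case $r = \lfloor (n-1)/2\rfloor$ separately: here I would bound $N_k(G)$ directly. If $G-D$ is not complete, then $G$ is a subgraph of a graph on $n$ vertices in which $r$ vertices have degree at most $r$ and the remaining $n-r$ vertices do not induce a clique, and a counting argument (the number of $K_k$'s through $D$ is at most $r\binom{r}{k-1}$ since each vertex of $D$ has at most $r$ neighbours, and the number avoiding $D$ is at most $\binom{n-r}{k}$ minus at least one for a missing edge) shows $N_k(G) \le h_k(n,r) \le h_k(n,\lfloor(n-1)/2\rfloor)$, a contradiction. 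So in all cases $G-D$ is complete.

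I expect the main obstacle to be the iteration in the first step: turning "many vertices of $A$ have low degree" into an honest set of $r+1$ vertices of degree at most $r+1$ requires care, because low degree in $G$ versus low degree in $G[A]$ must be tracked, and one must ensure the new set is not simply reshuffling $D$. The cleanest route is probably to argue that if $G[A]$ has a nonedge $uv$, then by \eqref{D1} and the fact that $u,v$ each send at most $r$ edges into $D$, both $u$ and $v$ have at most $r - |N(u)\cap D|$, respectively $r - |N(v)\cap D|$... neighbours in $A$; combined with a Hamiltonicity/P\'osa-type argument on $G[A]$ or on $G$ itself, this should produce the larger low-degree set. Since the lemma is stated as "an easy extension of Lemmas~6 and~7 in~\cite{oldmain}," I would follow those proofs closely, replacing the edge count $e(G) > h(n,\lfloor(n-1)/2\rfloor)$ everywhere by the clique count $N_k(G) > h_k(n,\lfloor(n-1)/2\rfloor)$ and checking that the same convexity/monotonicity of $h_k(n,\cdot)$ in its second argument — which holds for every $k\ge 2$ just as for $k=2$ — drives the contradiction.
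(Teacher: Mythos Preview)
Your framework (P\'osa, maximal $r$, the Ore-type degree condition~\eqref{D1}) matches the paper's, but the heart of the argument is missing, and the difficulty is not where you placed it.

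First, the boundary case is trivial, not delicate. If $r=\lfloor(n-1)/2\rfloor$ then the crude count $N_k(G)\le \binom{n-r}{k}+r\binom{r}{k-1}=h_k(n,\lfloor(n-1)/2\rfloor)$ already contradicts the hypothesis, with no reference to whether $G-D$ is complete. So the paper immediately concludes $r<\lfloor(n-1)/2\rfloor$, and by maximality $|D|=r$ exactly. All of the content lies in showing $G-D$ is complete when $r<\lfloor(n-1)/2\rfloor$, which is precisely where your plan is only a sketch.

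Second, your proposed mechanism there --- ``iterate to locate $r+1$ vertices of degree at most $r+1$'' --- does not obviously work, and the paper does something different and cleaner, with no iteration. Suppose $xy$ is a nonedge in $G-D$, chosen so that $d(x)$ is maximum among all vertices appearing in such nonedges. Set $D':=V(G)\setminus(N(x)\cup\{x\})$ and $r':=|D'|=n-1-d(x)$. Every $z\in D'$ is nonadjacent to $x$, so~\eqref{D1} gives $d(z)\le n-1-d(x)=r'$; thus $D'$ is a set of $r'$ vertices each of degree at most $r'$. Since $y\in D'$ and $d(y)>r$ (as $y\notin D$), we get $r'>r$, and maximality of $r$ then forces $r'>\lfloor(n-1)/2\rfloor$, i.e.\ $d(x)<\lceil(n-1)/2\rceil$. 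Now the choice of $x$ guarantees that every vertex of $D'\cup\{x\}$ has degree at most $\lfloor(n-1)/2\rfloor$: vertices in $D$ have degree $\le r$, and vertices of $(D'\cup\{x\})\setminus D$ have degree $\le d(x)$. Counting $k$-cliques via $D'$ gives
\[
N_k(G)\le\binom{n-r'}{k}+r'\binom{r'}{k-1}\le h_k\!\left(n,\left\lfloor\tfrac{n-1}{2}\right\rfloor\right),
\]
contradicting the hypothesis. The key idea you are missing is this one-shot construction of $D'$ as the non-neighbourhood of a carefully chosen $x$; the ``max $d(x)$'' choice is what makes the final degree bound go through without any iteration.
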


\begin{proof} Since $G$ is nonhamiltonian,  by Theorem~\ref{Posa}, there exists some $1\leq r \leq \left\lfloor \frac{n-1}{2} \right\rfloor$ such that $G$ has
$r$ vertices with degree at most $r$. Pick the maximum such $r$, and let $D$ be the set of the vertices with degree at most $r$.
Since $h_k(G) > h(n, \left \lfloor \frac{n-1}{2} \right \rfloor)$, $\; r<  \left \lfloor \frac{n-1}{2} \right \rfloor$. So, by the maximality of $r$,
$|D|=r$.

Suppose there exist $x, y \in V(G) - D$ such that $xy \notin E(G)$. Among all such pairs, choose $x$ and $y$ with the maximum $d(x)$.
Since $y\notin D$, $d(y)>r$. Let $D':=V(G)-N(x)-\{x\}$ and $r':=|D'|=n-1-d(x)$.
By~\eqref{D1},
\begin{equation}\label{818}
 \mbox{ \em $d(z) \leq n - 1 - d(x)=r'\;$ for all $\;z\in D'$.
}
\end{equation}
So $D'$ is a set of $r'$ vertices of degree at most $r'$.
Since $y\in D'$, $\, r' \geq d(y) > r$. Thus by the maximality of $r$, we get
  $r' = n-1-d(x)> \left\lfloor \frac{n-1}{2}\right\rfloor$.
Equivalently, $d(x) < \lceil \frac{n-1}{2}\rceil$.
For all $z \in D'+\{x\}$, either $z \in D$ where $d(z)\leq r \leq \left\lfloor\frac{n-1}{2}\right\rfloor$, or $z \in V(G) - D$,
and so $d(z) \leq d(x)\leq \left\lfloor \frac{n-1}{2}\right\rfloor$.

Now we count the number of $k$-cliques in $G$: Among $V(G) - D'$, there are at most ${n - r' \choose k}$ $k$-cliques. Also, each vertex in $D'$ can be in at most ${r' \choose k-1}$ $k$-cliques. Therefore $N_k(G) \leq {n-r' \choose k} + r'{r' \choose k-1} \leq h_k(n, \left\lfloor \frac{n-1}{2}\right\rfloor)$, a contradiction.
\end{proof}

Also, repeating the proof of Lemma 7 in \cite{oldmain} gives the following lemma.
\begin{lem}[Lemma 7 in \cite{oldmain}]\label{hnd}  Under the conditions of Lemma \ref{g-d}, if $r = \delta(G)$, then $G = H_{n,\delta(G)}$ or $G = K'_{n,\delta(G)}$.
\end{lem}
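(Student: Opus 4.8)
The plan is to use the extra hypothesis $r=\delta(G)$ to pin down the structure of $G$ completely. Write $A:=V(G)\setminus D$, so that $G[A]=K_{n-r}$ by Lemma~\ref{g-d}, and note that every vertex of $D$ has degree exactly $r$ (it has degree at most $r$ by the choice of $D$ and at least $\delta(G)=r$). First I would show that all vertices of $D$ share a common neighbourhood in $A$: if $v\in D$ and $u\in A$ with $uv\notin E(G)$, then \eqref{D1} gives $d(u)\le n-1-r$, while $u$ is already adjacent to all of $A\setminus\{u\}$, forcing $d(u)=n-r-1$ and hence $N(u)\cap D=\varnothing$. So each vertex of $A$ is adjacent to all of $D$ or to none of $D$; let $S$ be the set of the former, $s:=|S|$, and $A_0:=A\setminus S$. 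Then $N(v)\cap A=S$ for every $v\in D$, so $G[D]$ is $(r-s)$-regular. A short side argument (if $A_0=\varnothing$ then $s=n-r>r$, impossible; and a vertex of $A_0$ has degree $n-r-1$, which would place it in $D$ unless $n-r-1>r$) shows $A_0\neq\varnothing$ and $n\ge 2r+2$, hence $|A_0|=n-r-s\ge 2$.

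Next observe that the two target graphs correspond exactly to the extreme values $s=r$ (then $G[D]$ is empty and $G=H_{n,r}$) and $s=1$ (then $G[D]=K_r$ and $G=K'_{n,r}$), since in these cases the common-neighbourhood description reconstructs $G$ uniquely. So it suffices to rule out $2\le s\le r-1$. Assuming such $s$, I claim $G$ is hamiltonian, a contradiction. Let $P_1,\dots,P_p$ be a minimum path cover of $G[D]$ (isolated vertices counting as paths). If $p\le s-1$, pick distinct $w_1,\dots,w_p\in S$ and form the cycle that runs along $P_1$, then $w_1$, then $P_2$, then $w_2,\dots,P_p$, then $w_p$, then through all remaining vertices of $A$ — namely $(S\setminus\{w_1,\dots,w_p\})\cup A_0$, traversed as a path inside the clique $A$ and ending at a vertex of $S$ — and back to the start of $P_1$. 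Every edge used is present: consecutive $D$-vertices lie in a common $P_i$; each $w_i$ and the ends of $P_i,P_{i+1}$ lie in $S$ and $D$ respectively; the $A$-portion lies in the clique $A$; and its last vertex is in $S$, adjacent to the first vertex of $P_1\in D$. This is a hamiltonian cycle because $S\setminus\{w_1,\dots,w_p\}\neq\varnothing$ (as $s\ge p+1$) and $A_0\neq\varnothing$.

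It remains to bound the path-cover number $p$ of $G[D]$. Since $G[D]$ is $(r-s)$-regular with $r-s\ge 1$, each component $C$ is connected with minimum degree $r-s$, and hence (by the classical fact that a connected graph on $m$ vertices of minimum degree $\delta_0$ contains a path on $\min\{m,2\delta_0+1\}$ vertices) admits a path cover of size at most $\max\{1,|C|-2(r-s)\}$. Summing over components, using $|C|\ge r-s+1$ and $\sum_C|C|=r$ and distinguishing whether $G[D]$ is connected, an elementary computation gives $p\le\max\{1,\,2s-r\}\le s-1$, where the last inequality uses $2\le s\le r-1$. This yields the contradiction and finishes the proof. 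The step I expect to be the most delicate is this last one: extracting a bound on $p$ with exactly the right constant so that $p\le s-1$, together with checking the handful of degenerate configurations (for instance $p=1$, $G[D]$ connected, or $s$ near $r$); the hamiltonian-cycle construction and the common-neighbourhood argument are routine once the set-up is in place. The overall argument parallels the proof of Lemma~7 in~\cite{oldmain}.
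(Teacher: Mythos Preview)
Your proposal is correct; the paper does not supply its own proof of this lemma but simply defers to Lemma~7 of \cite{oldmain}, and your outline---exact degree $r$ on $D$, the common-neighbourhood dichotomy on $A$ via \eqref{D1}, and a path-cover argument to exclude $2\le s\le r-1$---is the natural reconstruction of that proof.

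Two remarks. First, you should record that $s=0$ is impossible (else $G[D]$ would be $r$-regular on $r$ vertices), and your phrase ``would place it in $D$'' really means that vertices of $A$ have degree $>r$ by the \emph{maximality} of $r$ established inside the proof of Lemma~\ref{g-d}; this is what gives $n\ge 2r+2$ once $A_0\neq\varnothing$. Second, the step you flagged as delicate can be done more cleanly by the device the paper states later as Lemma~\ref{paths}: since $G[D]$ is $(r-s)$-regular on $r$ vertices, every nonedge has degree-sum $2(r-s)=r-(2s-r)$, so adjoining a dominating clique of size $t=\max\{1,2s-r\}$ and applying Ore's theorem to the auxiliary $(r+t)$-vertex graph yields a hamiltonian cycle there, hence a partition of $D$ into at most $\max\{1,2s-r\}\le s-1$ paths in one stroke. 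This bypasses the per-component Dirac long-path estimate and the summation you sketched (which, for the record, does go through: the inequality $\max\{1,m-2(r-s)\}\le m-(r-s)$ for $m\ge r-s+1$ gives $p\le r-q(r-s)\le 2s-r$ when $q\ge 2$, and $q=1$ is immediate).
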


\section{Maximizing the number of copies of a given graph and a proof of Theorem \ref{sub}}\label{gen}

In order to prove Theorem \ref{sub}, we first show that for any fixed graph $F$ and any $d$, of the two extremal graphs of Lemma \ref{hnd},
if $n$ is large then $H_{n,d}$ has at least as many copies of $F$ as $K'_{n,d}$.

\begin{lem}\label{subk}For any $d,t,n \in \mathbb N$ with $n \geq 2 dt + d + t$ and any graph $F$ with $t=|V(F)|$
we have $N(K'_{n,d}, F) \leq N(H_{n,d}, F)$.
\end{lem}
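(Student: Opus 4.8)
The plan is to compare, for each fixed $t$-vertex graph $F$, the number of labeled copies of $F$ in the two graphs $K'_{n,d}$ and $H_{n,d}$ vertex-by-vertex, or rather "copy location"-by-"copy location." Both graphs consist of a large clique together with a small attached gadget, so almost every copy of $F$ lands entirely inside the big clique and is counted identically in both. Concretely, write $K'_{n,d} = K_{n-d} \cup K_{d+1}$ glued at a cut vertex $w$, and recall $H_{n,d}$ is $K_{n-d}$ on a set $A$ plus $d$ extra vertices $b_1,\dots,b_d$ each joined to the same $d$-set $S\subseteq A$. In $K'_{n,d}$ the "small side" consists of $w$ together with $d$ further vertices forming a clique $K_{d+1}$; call this set $R$ with $|R|=d+1$. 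I would set up an injection from labeled copies of $F$ in $K'_{n,d}$ to labeled copies of $F$ in $H_{n,d}$.

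The key steps, in order: (1) Partition the labeled copies $\phi\colon V(F)\to V(K'_{n,d})$ according to $U_\phi := \phi^{-1}(\text{small side}\setminus\{w\})$, the set of vertices of $F$ mapped into the $d$-vertex "private" part of the $K_{d+1}$. If $U_\phi=\emptyset$, the copy lies in $K_{n-d}$ (possibly using $w$), and the identical copy exists in $H_{n,d}$ inside $A=K_{n-d}$; this gives a bijection on this block. (2) If $U_\phi\neq\emptyset$, then since only $w$ connects the small side to the rest, $U_\phi\cup\phi^{-1}(w)$ must be "separated" from $\phi^{-1}(K_{n-d}\setminus\{w\})$ in $F$: every edge of $F$ leaving $U_\phi$ must pass through the single vertex $\phi^{-1}(w)$. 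In particular $F[U_\phi]$ together with that one vertex must embed into $K_{d+1}$, so $|U_\phi|\le d$, and moreover $U_\phi$ has at most one neighbor (namely $\phi^{-1}(w)$) outside itself in $F$. (3) Now build the corresponding copy in $H_{n,d}$: the vertices of $U_\phi$ get sent to the private vertices $b_1,\dots,b_{|U_\phi|}$, the vertex $\phi^{-1}(w)$ gets sent into the attachment set $S$ (which has size $d\ge |U_\phi|$... wait, need $\phi^{-1}(w)$ adjacent to all of $U_\phi$'s images — that holds since each $b_i$ is adjacent to all of $S$), and the remaining vertices of $F$ are mapped into $A$, respecting adjacencies. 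Because $b_i$'s are adjacent to all of $S$ and $S\subseteq A=K_{n-d}$ and $A$ is complete, any choice is consistent. (4) Count: for each fixed pattern (isomorphism type of how $F$ decomposes), the number of ways to place the "big part" into the clique is the same up to lower-order terms, while the number of ways to place the small part is at least as large in $H_{n,d}$ because we have the full set $S$ of size $d$ plus the rest of $A$ to receive $\phi^{-1}(w)$ and its clique-neighborhood, rather than being confined to a $(d+1)$-clique. Summing over the finitely many patterns and using $n\ge 2dt+d+t$ to absorb error terms gives $N(K'_{n,d},F)\le N(H_{n,d},F)$.

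I would actually make step (4) cleanest by a direct injective map rather than asymptotic counting: send a copy $\phi$ with small-part $U_\phi=\{u_1,\dots,u_j\}$ (in some fixed order of $V(F)$), $\phi^{-1}(w)=u_0$, to the copy $\psi$ defined by $\psi(u_i)=b_i$ for $1\le i\le j$, $\psi(u_0)=a_1\in S$, and $\psi(x)=\phi(x)$ for all other $x$ (identifying $V(K_{n-d})$ with $A$ so that $w$ is identified with $a_1$... one must be slightly careful that $a_1$ is not already used, but since $u_0\mapsto w\mapsto a_1$ this is consistent and the rest of $\phi$ avoided $w$). One checks $\psi$ is a valid labeled copy: edges inside $A$ survive since $A$ is complete; edges from $u_i$ ($i\ge1$) go only to $u_0$ (mapped to $a_1\in S$) or to other $u_{i'}$, and $b_i\sim a_1$ and $b_i\sim b_{i'}$? — no, the $b_i$ are pairwise nonadjacent in $H_{n,d}$! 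So this needs a fix: edges among $U_\phi$ in $F$ cannot be accommodated by the $b_i$'s.

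The main obstacle is exactly this: $F[U_\phi]$ may have edges, which $K_{d+1}$ supplies but the independent set $\{b_1,\dots,b_d\}$ in $H_{n,d}$ does not. The resolution is to instead map the clique $\phi^{-1}(R)$ of size $|U_\phi|+1\le d+1$ — which is a subset of $V(F)$ spanning at most a $(d+1)$-clique with at most one edge leaving it — into the clique $S\cup\{b_1\}$ of $H_{n,d}$: indeed $b_1$ together with all of $S$ forms a $(d+1)$-clique in $H_{n,d}$, since $b_1\sim s$ for every $s\in S$ and $S$ is complete. So the corrected injection sends $\phi^{-1}(R)$ into $S\cup\{b_1\}$ (an honest $(d+1)$-clique, matching the role of $R$), fixes $\phi$ elsewhere on $A\setminus(S\cup\{b_1\})$, and one must only verify the single "bridge" edge (from $\phi^{-1}(w)$ into the rest of $F$) still works — it does, because $\phi^{-1}(w)$ now lands in $S\cup\{b_1\}\subseteq A$ and $A$ is complete, so it is adjacent in $H_{n,d}$ to every vertex of $A$. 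This map is injective on each block of the partition of copies of $K'_{n,d}$ by $\phi^{-1}(R)$, and its images into $H_{n,d}$ over distinct blocks are disjoint (distinguished by which vertices of $F$ land in $S\cup\{b_1\}$ versus in $A\setminus(S\cup\{b_1\})$), so summing gives the desired inequality. The hypothesis $n\ge 2dt+d+t$ is only needed to guarantee $|A\setminus(S\cup\{b_1\})|=n-2d-1\ge t$ so that the "big part" of every copy fits, which is where that bound enters.
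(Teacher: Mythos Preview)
Your injection does not work as stated, and the gap is not merely cosmetic. Two concrete problems:

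\textbf{(a) The bridge claim is false.} You write that $\phi^{-1}(w)$ ``lands in $S\cup\{b_1\}\subseteq A$,'' but $b_1\notin A$. If your map happens to send $\phi^{-1}(w)$ to $b_1$, then the bridge edge from $\phi^{-1}(w)$ to a vertex $x$ with $\psi(x)\in A\setminus S$ is \emph{not} present in $H_{n,d}$, since $b_1$ is adjacent only to $S$.

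\textbf{(b) Collisions / cardinality mismatch.} Suppose you repair (a) by insisting $\phi^{-1}(w)\mapsto a_1\in S$. You then ``fix $\phi$ elsewhere''; but under any identification of the big cliques, a vertex $x\notin\phi^{-1}(R)$ can have $\phi(x)$ correspond to some $a_j\in S\setminus\{a_1\}$, colliding with the image of $\phi^{-1}(R)\subseteq S\cup\{b_1\}$. If instead you force the ``elsewhere'' part into $A\setminus S$, you are trying to inject embeddings of a $(t-|T|)$-set into a clique of size $n-d-1$ (the big side of $K'_{n,d}$ minus $w$) into embeddings into a clique of size $n-2d$; for $d\geq 2$ this is impossible on cardinality grounds alone. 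The same collision kills your block-disjointness claim: from $\psi$ you cannot recover $T=\phi^{-1}(R)$ because vertices outside $T$ may land in $S$.

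A further red flag is your use of the hypothesis: you invoke $n\ge 2dt+d+t$ only to get $n-2d\ge t$, which needs merely $n\ge t+2d$. The actual bound is doing real work that your argument never performs.

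The paper does \emph{not} construct an injection. It partitions embeddings on both sides by the subset $C\subseteq V(F)$ that lands in the small gadget, and for each $C$ proves $|\Phi(C)|\le|\Psi(C)|$ by separate upper and lower counts. The crucial point your approach misses is that $H_{n,d}$ has $d$ vertices $b_1,\dots,b_d$ and $d$ possible ``entry points'' in $S$; this gives a multiplicative factor of order $d$ (specifically $d(s-1)$ or $ds/(d+1)$ depending on whether $C$ is joined to $\overline{C}$ in $F$) in the lower bound for $|\Psi(C)|$. That factor is exactly what compensates for the drop from $(n-d-1)_{t-s}$ to $(n-2d)_{t-s}$ on the big side, and the inequality $n\ge 2dt+d+t$ is precisely what makes the resulting ratio at least $1$. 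Using only the single clique $S\cup\{b_1\}$ throws this factor away and cannot succeed.
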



\begin{proof}
Fix $F$ and $t = |V(F)|$. Let $K'_{n,d} = A \cup B$ where $A$ and $B$ are cliques of order $n-d$ and $d+1$ respectively
and $A \cap B = \{v^*\}$, the cut vertex of $K'_{n,d}$. Also, let $D$ denote the independent set of order $d$ in $H_{n,d}$.
We may assume $d \geq 2$, because $H_{n,1} = K'_{n,1}$.
If $x$ is an isolated vertex of $F$ then for any $n$-vertex graph $G$ we have
  $N(G,F)= (n-t+1) N(G, F-x)$.
So it is enough to prove the case $\delta (F)\geq 1$, and we may also assume $t\geq 3$.

Because both $K'_{n,d}[A]$ and $H_{n,d}- D$ are cliques of order $n-d$, the number of embeddings of $F$ into $K'_{n,d}[A]$ is the same as the number of embeddings of $F$ into $H_{n,d} - D$. So it remains to compare only the number of embeddings in $\Phi:= \{ \varphi: V(F)\to V(K'_{n,d})$ such that $\varphi(F)$ intersects $B - v^* \}$
to the number of embeddings in $\Psi:= \{ \psi: V(F)\to V(H_{n,d})$ such that $\psi(F)$ intersects $D\}$.

Let $C\cup \overline{C}$ be a partition of the vertex set $V(F)$, $s:=|C|$.
Define the following classes of $\Phi$ and $\Psi$\newline
${}$\quad ---\quad $\Phi(C):= \{ \varphi: V(F)\to V(K'_{n,d})$ such that $\varphi(C)$ intersects $B- v^*$, $\varphi(C)\subseteq B$,
  and $\varphi(\overline{C})\subseteq V -B\}$,
\newline
${}$\quad ---\quad $\Psi(C):= \{ \psi: V(F)\to V(H_{n,d})$ such that  $\psi(C)$ intersects $D$, $\psi(C)\subseteq (D\cup N(D))$,
  and $\psi(\overline{C})\subseteq V- (D\cup N(D))\}$.
\newline
By these definitions, if $C\neq C'$ then $\Phi(C)\cap \Phi(C')= \emptyset$,  and  $\Psi(C)\cap \Psi(C')= \emptyset$.
Also $\bigcup_{\emptyset \neq C \subseteq V(F)} \Phi(C)= \Phi$.
We claim that for every $C\neq \emptyset$, 
\begin{equation}\label{eqC}
|\Phi(C)|\leq |\Psi(C)|.
   \end{equation}
Summing up the number of embeddings over all choices for $C$ will prove the lemma.
If $\Phi(C)=\emptyset$, then~\eqref{eqC} obviously holds. So from now on, we consider the cases when $\Phi(C)$ is not empty, implying
 $1\leq s\leq d+1$.

{\bf Case 1}: There is an $F$-edge joining $\overline{C}$ and $C$.
So there is a vertex $v\in C$ with $N_F(v)\cap \overline{C}\neq \emptyset$.
Then for every mapping  $\varphi\in \Phi(C)$, the vertex $v$ must be mapped to $v^*$ in $K'_{n,d}$, $\varphi(v)= v^*$.
So this vertex $v$ is uniquely determined by $C$.
Also, $\varphi(C)\cap (B- v^*) \neq \emptyset$ implies $s\geq 2$.
The rest of $C$ can be mapped arbitrarily to $B-v^*$ and $\overline {C}$ can be mapped arbitrarily to $A-v^*$. We obtained that
   $|\Phi(C)|= (d)_{s-1}(n-d-1)_{t-s}$.

We make a lower bound for $|\Psi(C)|$ as follows.
We define a $\psi\in \Psi(C)$ by the following procedure.
Let $\psi(v)=x\in N(D)$ (there are $d$ possibilities), then map some vertex of $C-v$ to a vertex $y\in D$ (there are $(s-1)d$ possibilities).
Since $N+y$ forms a clique of order $d+1$ we may embed the rest of $C$ into $N-v$ in $(d-1)_{s-2}$ ways and finish
 embedding of $F$ into $H_{n,d}$ by arbitrarily placing the vertices of $\overline{C}$ to $V-(D\cup N(D))$.
We obtained that   $|\Psi(C)|\geq d^2(s-1)(d-1)_{s-2}(n-2d)_{t-s} = d(s-1)(d)_{s-1}(n-2d)_{t-s}$.

  Since $s\geq 2$ we have that
\begin{eqnarray*}
\frac{|\Psi(C)|}{|\Phi(C)|}\geq \frac{d(s-1)(d)_{s-1}(n-2d)_{t-s}}{(d)_{s-1}(n-d-1)_{t-s}}
       &\geq & d(2-1) \left(\frac{n-2d+1-t+s}{n-d-t+s}\right)^{t-s} \\
 & = &  d\left( 1 - \frac{d-1}{n-d-t+s}\right)^{t-s} \\
 & \geq &  d\left( 1 - \frac{(d-1)(t-s)}{n-d-t+s}\right)\\
 & \geq &  d\left(1 - \frac{(d-1)t}{n-d-t} \right)\\
 &> & 1\text{ when }  n> dt + d + t.
 \end{eqnarray*}

{\bf Case 2}:
$C$ and $\overline C$ are not connected in $F$.
We may assume $s \geq 2$ since $C$ is a union of components with $\delta(F)\geq 1$.
In $K'_{n,d}$ there are at exactly $(d+1)_{s}(n-d-1)_{t-s}$ ways to embed $F$ into $B$ so that only $C$ is mapped into $B$ and
 $\overline{C}$ goes to $A-v^*$, i.e.,  $|\Phi(C)|= (d+1)_{s}(n-d-1)_{t-s}$.

We make a lower bound for $|\Psi(C)|$ as follows.
We define a $\psi\in \Psi(C)$ by the following procedure.
Select any vertex $v\in C$ and map it to some vertex in $D$ (there are $sd$ possibilities), then map  $C-v$ into  $N(D)$ (there are $(d)_{s-1}$ possibilities)  and finish
 embedding of $F$ into $H_{n,d}$ by arbitrarily placing the vertices of $\overline{C}$ to $V-(D\cup N(D))$.
We obtained that   $|\Psi(C)|\geq ds(d)_{s-1}(n-2d)_{t-s}$.
 We have
\begin{eqnarray*}
\frac{|\Psi(C)|}{|\Phi(C)|}\geq
\frac{ds(d)_{s-1}(n-2d)_{t-s}}{(d+1)_{s}(n-d-1)_{t-s}} & \geq & \frac{ds}{d+1} \left(1 - \frac{(d-1)t}{n-d-t} \right)\\
&\geq & \frac{2d}{d+1}\left(1 - \frac{(d-1)t}{n-d-t} \right) \text{ because } s \geq 2 \\
& > & 1 \text{ when } n >2dt + d + t.
\end{eqnarray*}
\end{proof}

We are now ready to prove Theorem ~\ref{sub}.

{\bf Theorem \ref{sub}}. {\em For every graph $F$ with $t:=|V(F)|\geq 3$, any $d \in \mathbb N$, and any $n \geq n_0(d,t):= 4dt+3d^2 + 5t$,
if $G$ is an $n$-vertex nonhamiltonian graph with minimum degree $\delta(G) \geq d$, then $N(G,H) \leq N(H_{n,d}, F)$.}


\begin{proof}
Let $d \geq 1$. Fix a graph $F$ with $|V(F)| \geq 3$ (if $|V(F)| = 2$, then either $F=K_2$ or $F=\overline K_2$). The case where $G$ has isolated vertices can be handled by induction on the number of isolated vertices, hence we may assume each vertex has degree at least 1. Set
\begin{equation}\label{alli}
n_0= 4dt + 3d^2 + 5t.
\end{equation}

 Fix a nonhamiltonian graph $G$ with $|V(G)|=n \geq n_0$ and $\delta(G) \geq d$
 such that $N(G,F) > N(H_{n,d}, F) \geq (n-d)_t$. We may assume that $G$ is saturated, as the number of copies of $F$ can only increase when we add edges to $G$.

Because $n \geq 4dt + t$ by (\ref{alli}),
\begin{eqnarray*}
\frac{(n-d)_t }{(n)_t} & \geq & \left(\frac{n-d-t}{n-t}\right)^t
 =  \left(1 - \frac{d}{n-t}\right)^t\\
& \geq & 1 - \frac{dt}{n-t}
 \geq  1-\frac{1}{4} = \frac{3}{4}.
\end{eqnarray*}
So, $(n-d)_t \geq \frac{3}{4} (n)_t$.

After mapping edge $xy$ of $F$ to an edge of $G$ (in two labeled ways), we obtain the loose upper bound, \[
2e(G)(n-2)_{t-2} \geq N(G,F) \geq (n-d)_t \geq \frac{3}{4} (n)_t,\]therefore
\begin{equation} \label{eps}
 e(G)   \geq \frac{3}{4}{n \choose2} > h_2(n,\lfloor (n-1)/2 \rfloor).
\end{equation}
By P\'osa's theorem (Theorem \ref{Posa}), there exists some $d \leq r \leq \lfloor (n-1)/2 \rfloor$ such that $G$ contains
a set $R$ or $r$ vertices with degree at most $r$. Furthermore by (\ref{eps}), $r < d_0$. So by integrality, $r\leq d_0-1 \leq (n+3)/6$.
 If $r = d$, then by Lemma \ref{hnd}, either $G = H_{n,d}$ or $G=K'_{n,d}$. By Lemma ~\ref{subk} and (\ref{alli}), $G = H_{n,d}$, a contradiction.
So we have $r \geq d+1$.

Let $\mathcal{I}$ denote the family of all nonempty independent sets in $F$.
For $I\in \mathcal{I}$, let $i=i(I):= |I|$ and $j=j(I):=|N_F(I)|$.
Since $F$ has no isolated vertices, $j(I)\geq 1$ and so $i \leq t-1$ for each $I\in \mathcal{I}$. Let $\Phi(I)$ denote the set of
embeddings
  $\varphi: V(F)\to V(G)$ such that $\phi(I)\subseteq R$ and $I$ is a maximum independent
subset of  $\phi^{-1}(R\cap \varphi(F))$.
Note that  $\varphi(I)$ is not necessarily independent in $G$. We show that
\begin{equation} \label{0322}
|\Phi(I)|\leq (r)_ir(n-r)_{t-i-1}.
\end{equation}
Indeed, there are $(r)_i$ ways to choose $\phi(I)\subseteq R$. After that, since each vertex in $R$  has at most $r$ neighbors in $G$,
there are at most $r^j$ ways to embed $N_F(I)$ into $G$. By the maximality of $I$, all vertices of $F-I-N_F(I)$ should be mapped to
$V(G)-R$. There are at most $(n-r)_{t-i-j}$ to do it. Hence $|\Phi(I)|\leq (r)_ir^j(n-r)_{t-i-j}$. Since $2r+t\leq 2(d_0-1)+t<n$,
this implies~\eqref{0322}.

Since each  $\varphi: V(F)\to V(G)$ with $\varphi(V(F))\cap R\neq \emptyset$ belongs to $\Phi(I)$ for some nonempty  $I\in \mathcal{I}$,
\eqref{0322}~implies
\begin{equation} \label{03222}
N(G,F)\leq (n-r)_t+\sum_{\emptyset\neq I\in \mathcal{I} }|\Phi(I)|\leq
(n-r)_t+\sum_{i=1}^{t-1}\binom{t}{i}(r)_ir(n-r)_{t-i-1}.
\end{equation}

Hence
\begin{eqnarray*}
\frac{N(G,F)}{N(H_{n,d},F)} &\leq & \frac{(n-r)_t+\sum_{i=1}^{t-1}\binom{t}{i}(r)_ir(n-r)_{t-i-1}}
{(n-d)_t}\\
& \leq & \frac{(n-r)_t}{(n-d)_t} + \frac{1}{(n-d)_t}\times \frac{r}{n-r-t+2}\sum_{i=1}^{t-1}\binom{t}{i}(r)_i (n-r)_{t-i}
\\
&  =& \frac{(n-r)_t}{(n-d)_t} + \frac{(n)_t -(n-r)_t -(r)_t}{(n-d)_t}\times \frac{r}{n-r-t+2}
\\
& \leq & \frac{(n-r)_t}{(n-d)_t} \times \frac{n-t+2 -2r}{n-t+2-r} + \frac{(n)_t }{(n-d)_t}\times \frac{r}{n-t+2-r}:= f(r).
\end{eqnarray*}

Given fixed $n,d,t$, we claim that the real function $f(r)$ is convex for $0< r< (n-t+2)/2$.


Indeed, the first term $g(r) := \frac{(n-r)_t}{(n-d)_t} \times \frac{n-t+2 -2r}{n-t+2-r}$ is a product of $t$ linear terms in each of which $r$ has a negative coefficient (note that the $n-t+2-r$ term cancels out with a factor of $n-r-t+2$ in $(n-r)_t$). Applying product rule, the first derivative $g'$ is a sum of $t$ products, each with $t-1$ linear terms. For $r < (n-t+2)/2$, each of these products is negative, thus $g'(r) < 0$. Finally, applying product rule again, $g''$ is the sum of $t(t-1)$ products. For $r < (n-t+2)/2$ each of the products is positive, thus $g''(r) > 0$.

Similarly, the second factor of the second term (as a real function of $r$, of the form  $r/(c-r)$) is convex for $r< n-t+2$.

We conclude that in the interval $[d+1, (n+3)/6]$ the function  $f(r)$ takes its maximum either at one of the endpoints $r = d+1$ or $r = (n+3)/6$. We claim that $f(r)<1$ at both end points.

In case of $r=d+1$ the first factor of the first term equals $(n-d-t)/(n-d)$.
To get an upper bound for the first factor of the second term one can use the inequality
  $\prod (1+x_i) < 1+ 2 \sum x_i $ which holds for any number of non-negative $x_i$'s if $0< \sum x_i \leq 1$.
Because $dt/(n-d-t+1)\leq 1$ by (\ref{alli}), we obtain that
\begin{eqnarray*}
 f(d+1) & < &  \frac{n-d-t}{n-d} \times \frac{n-t -2d}{n-t-d+1} + \left(1+ \frac{2dt}{n-d-t+1}\right) \times \frac{d+1}{n-t-d+1}\\
& = & \left(1-\frac{t}{n-d} \right) \times \left(1-\frac{d+1}{n-t-d+1} \right) + \left(\frac{d+1}{n-t-d+1}\right) + \left(\frac{2dt(d+1)}{(n-t-d+1)^2} \right) \\
& = & 1 - \frac{t}{n-d} + \frac{t}{n-d}  \times \frac{d+1}{n-t-d+1} + \frac{t}{n-d} \times \frac{2d(d+1)}{n-t-d+1} \times  \frac{n-d}{n-t-d+1} \\
& = & 1 - \frac{t}{n-d} \times \left(1 - \frac{d+1}{n-t-d+1} - \frac{2d(d+1)}{n-t-d+1} \times \left(1 + \frac{t-1}{n-t-d+1}\right)\right)
\\
&<&1 - \frac{t}{n-d} \times (1 - \frac{1}{4t}  - \frac{2}{3}(1 + \frac{1}{4d}))\\
&\leq & 1 - \frac{t}{n-d}\times( 1 - 1/12 - 2/3\times 5/4)\\
&<& 1.
\end{eqnarray*}
Here we used that $n\geq 3d^2+ 2d +t$ and $n \geq 4dt + 5t + d$ by (\ref{alli}), $t\geq 3$, and $d\geq 1$.

To bound $f(r)$ for other values of $r$, let us use $1+x\leq e^x$ (true for all $x$).  We get
$$
  f(r) <  \exp\left\{ - \frac{(r-d)t}{n-d-t+1}  \right\} + \frac{r}{n-r-t+2}\times \exp\left\{
\frac{dt}{n-d-t+1}  \right\}.
$$
When $r=(n+3)/6$, $t\geq 3$, and $n\geq 24d$ by (\ref{alli}),  the first term is at most $e^{-18/46}=0.676...$.
Moreover, for $n\geq 9t$ (\ref{alli}) (therefore $n \geq 27$) we get that $\frac{r}{n-r-t+2}$ is maximized when $t$ is maximized, i.e., when $t  = n/9$.  The whole term is at most $(3n+9)/(13n+27) \times e^{1/4} \leq 5/21 \times e^{1/4}=0.305...$,
 so in this range, $f((n+3)/6)<1$.

 By the convexity of $f(r)$, we have $N(G,F) < N(H_{n,d}, F)$.
\end{proof}

When $F$ is a star, then it is easy to determine $\max N(G,F)$ for all $n$.

\begin{claim}\label{cl:star} 
Suppose $F=K_{1,t-1}$ with $t:=|V(F)|\geq 3$, and $t\leq n$ and $d$ are integers with $1\leq d \leq \lfloor (n-1)/2\rfloor$.
If $G$ is an $n$-vertex nonhamiltonian graph with minimum degree $\delta(G) \geq d$, then 
\begin{equation}\label{eq:star}
  N(G,F) \leq \max \left\{ H_{n,d},  H_{n,\lfloor (n-1)/2 \rfloor}\right\},  
    \end{equation}
and equality holds if and only if  $G\in \left\{ H_{n,d},  H_{n,\lfloor (n-1)/2 \rfloor}\right\}$. 
\end{claim}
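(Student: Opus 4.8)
The plan is to reduce to counting stars directly in terms of degree sequences, which is possible because $N(G, K_{1,t-1})$ depends only on the degrees of $G$: each labeled copy of $K_{1,t-1}$ is a choice of a center $v$ together with an ordered $(t-1)$-tuple of distinct neighbors of $v$, so
\[
N(G, K_{1,t-1}) = \sum_{v \in V(G)} (d(v))_{t-1}.
\]
The function $x \mapsto (x)_{t-1}$ is convex and increasing on $[t-2, \infty)$ and vanishes on $\{0,1,\dots,t-2\}$, so in particular it is convex on all of $[0,\infty)$ after the obvious adjustment. Hence I want to show that among the degree sequences of $n$-vertex nonhamiltonian graphs with $\delta \geq d$, the sequence realized by $H_{n,d}$ or by $H_{n,\lfloor(n-1)/2\rfloor}$ dominates $\sum_v (d(v))_{t-1}$.

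First I would reduce to saturated graphs: adding an edge only increases every term $(d(v))_{t-1}$, so it suffices to prove the bound for saturated $G$, and then check the equality case separately (a saturated proper supergraph of any graph in the claimed extremal set would already violate the bound, so equality forces $G$ itself to be one of them). For a saturated $G$, apply P\'osa's theorem (Theorem \ref{Posa}) to get $1 \le r \le \lfloor(n-1)/2\rfloor$ and a set $D$ of $r$ vertices each of degree at most $r$; among all valid choices take $r$ maximum. If $N(G,F)$ is large enough to beat $h$-type bounds one expects, as in Lemma \ref{g-d}, that $G - D$ is a complete graph $K_{n-r}$ with all vertices of degree $n - 1 - |D \setminus (\text{non-neighbors})|$; more precisely, repeating the argument of Lemma \ref{g-d} (whose hypothesis there is phrased for $N_k$ but whose proof only uses Posa plus \eqref{D1}) gives that either $G$'s ``large part'' is complete, or $N(G,F)$ is already at most $N(H_{n,\lfloor(n-1)/2\rfloor}, F)$ by the same double-counting that bounded $N_k$. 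In the first case $G \subseteq H_{n,r}$ or $G \subseteq K'_{n,r}$ by Lemma \ref{hnd} (again the proof is structural and degree-based, so it transfers), and since $K'_{n,r}$ has the same degree sequence as... no: $K'_{n,r}$ has its cut vertex of degree $n-1$, so I instead compare directly, $\sum_v(d(v))_{t-1}$ for $K'_{n,r}$ versus for $H_{n,r}$, which is an elementary inequality since $K'_{n,r}$ trades the high-degree contributions of $H_{n,r}$'s clique for one vertex of degree $n-1$ and $r$ vertices of degree $r$ — one shows $H_{n,r}$ wins for $n \ge t$ by a short convexity/monotonicity computation.

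It then remains to compare $H_{n,r}$ across $d \le r \le \lfloor (n-1)/2\rfloor$. Here $H_{n,r}$ has $n - r$ vertices of degree $n-1$ (the clique $A$... wait, $d$ of them have degree only $n-1$ as well since $A$ is complete and each attaches to the $d$ pendant vertices) — concretely $H_{n,r}$ has $n-r$ vertices of degree $n-1$ and $r$ vertices of degree $r$, so
\[
N(H_{n,r}, K_{1,t-1}) = (n-r)\,(n-1)_{t-1} + r\,(r)_{t-1}.
\]
As a function of the real variable $r$ on $[1, \lfloor(n-1)/2\rfloor]$ this is $-r(n-1)_{t-1} + n(n-1)_{t-1}/(\cdots)$ plus $r(r)_{t-1}$: the first part is linear (decreasing) in $r$ and the second, $r(r)_{t-1} = (r+1)_t/( \text{const})$... more simply $r \cdot (r)_{t-1}$ is convex and increasing, so the whole expression is convex in $r$, hence maximized at an endpoint $r = d$ or $r = \lfloor(n-1)/2\rfloor$. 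That gives \eqref{eq:star}. For the equality characterization, trace back: convexity is strict unless $t \le$ something trivial, Lemma \ref{hnd} pins down $G$ exactly when the large part is complete, and the $K'$ versus $H$ comparison is strict for $n \ge t$; so equality forces $G \in \{H_{n,d}, H_{n,\lfloor(n-1)/2\rfloor}\}$. The main obstacle I anticipate is the bookkeeping in the case $d+1 \le r < d_0$ where $G \subsetneq H_{n,r}$ or $G \subsetneq K'_{n,r}$ strictly: I need that such a $G$ cannot have more stars than $H_{n,d}$, which follows from the endpoint-convexity computation above applied to the (possibly smaller) degree sequence of $G$, but making the monotonicity under ``taking subgraphs of $H_{n,r}$'' airtight, together with the small cases $d \in \{1,2,3\}$ where the structure lemmas have extra exceptional graphs, will need care.
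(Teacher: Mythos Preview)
Your overall strategy (reduce to degree sequences, bound by $N(H_{n,r},F)$, then use convexity in $r$) matches the paper's, but there is a concrete error and a structural detour that the paper avoids.

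\textbf{The error.} Your degree sequence for $H_{n,r}$ is wrong. In $H_{n,r}$ only the $r$ designated vertices of the clique $A$ are adjacent to the $r$ outside vertices; the remaining $n-2r$ vertices of $A$ have degree $n-r-1$, not $n-1$. So the correct count is
\[
N(H_{n,r},K_{1,t-1}) \;=\; r\,(r)_{t-1} \;+\; (n-2r)\,(n-r-1)_{t-1} \;+\; r\,(n-1)_{t-1},
\]
not $(n-r)(n-1)_{t-1}+r(r)_{t-1}$. This matters both for the convexity step and, more importantly, for the comparison between $G$ and $H_{n,r}$: your two-tier bound on $G$'s degree profile (``$r$ vertices of degree $\le r$, the rest of degree $\le n-1$'') is too crude to recover the correct $H_{n,r}$ count, and so does not yield the inequality with equality only at $H_{n,r}$.

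\textbf{The detour.} You try to reach $G\subseteq H_{n,r}$ or $G\subseteq K'_{n,r}$ via Lemmas~\ref{g-d} and~\ref{hnd}. But Lemma~\ref{g-d} is stated under the hypothesis $N_k(G)>h_k(n,\lfloor(n-1)/2\rfloor)$, and Lemma~\ref{hnd} requires $r=\delta(G)$; neither is available here, and patching them in this context is exactly the ``obstacle'' you flag at the end. The paper sidesteps all of this: it takes the \emph{minimum} $r$ from P\'osa (not the maximum), so that some $v\in R$ has $d(v)=r$ exactly; then Ore's condition~\eqref{D1} forces $d(w)\le n-r-1$ for every $w\notin N(v)$. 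That immediately gives the three-tier degree bound ($r$ vertices of degree $\le r$; at least $n-2r$ vertices of degree $\le n-r-1$; at most $r$ vertices of degree $\le n-1$), which termwise is dominated by the degree sequence of $H_{n,r}$ and hence yields $N(G,F)\le N(H_{n,r},F)$ with equality only if $G=H_{n,r}$. No structural classification, no $K'_{n,r}$ comparison, and no ``small $d$'' exceptional cases are needed.
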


\begin{proof}
The number of copies of stars in a graph $G$ depends only on the degree sequence of the graph: 
if a vertex $v$ of a graph $G$ has degree $d(v)$, then there are $(d(v))_{t-1}$ labeled copies of $F$ in $G$ where $v$ is the center vertex. We have 
\begin{equation}\label{eq:starformula}
  N(G,F)= \sum_{v\in V(G)} \binom{d(v)}{t-1}.
  \end{equation} 
Since $G$ is nonhamiltonian, P\'osa's theorem yields an $r\leq \lfloor (n-1)/2 \rfloor$, and an $r$-set $R\subset V(G)$ such that 
  $d_G(v)\leq r$ for all $v\in R$.
Take the minimum such $r$, then there exists a vertex $v\in R$ with $\deg(v)=r$. 
We may also suppose that $G$ is edge-maximal nonhamiltonian, so Ore's condition~\eqref{D1} holds. 
It implies that $\deg(w)\leq n-r-1$ for all $w\notin N(v)$.
Altogether we obtain that $G$ has $r$ vertices of degree at most $r$, at least $n-2r$ vertices (those in $V(G)-R-N(v)$) of degree at most $(n-r-1)$. This implies that
  the right hand side of~\eqref{eq:starformula} is at most
  \[
   r\times (r)_{t-1}+ (n-2r)\times (n-r-1)_{t-1} + r\times (n-1)_{t-1}= N(H_{n,r}, F).
  \]
(Here equality holds only if $G=H_{n,r}$). Note that $r \in [d, \lfloor \frac{1}{2}(n-1) \rfloor]$.
Since for given $n$ and $t$ the function $N(H_{n,r}, F)$ is strictly convex in $r$, it takes its maximum at one of the endpoints of the interval. 
   \end{proof}

\begin{rem}\label{rem13} 
As it was mentioned in Section~\ref{res}, $O(dt)$ is the right order for $n_0(d,t)$ when $d=O(t)$.
    \end{rem}
  
  To see this, fix $d \in \mathbb N$ and let $F$ be the star on $t\geq 3$ vertices.
If $d < \lfloor (n-1)/2 \rfloor$, $t\leq n$ and $n\leq dt-d$, then $H_{n,\lfloor(n-1)/2\rfloor}$ contains more copies of $F$ than $H_{n,d}$ does, 
the maximum in~\eqref{eq:star} is reached for $r=\lfloor(n-1)/2\rfloor$. 
We present the calculation below only for $2d+7\leq n\leq dt-d$, the case
 $2d+3\leq n\leq 2d+6$ can be checked by hand by plugging $n$ into the first line of the formula below.
We can proceed as follows.

\begin{eqnarray*}
N(H_{n,\lfloor (n-1)/2 \rfloor}, F) - N(H_{n,d}, F) &= & 
  \Big(\lfloor (n-1)/2 \rfloor (n-1)_{t-1} + \lceil (n+1)/2 \rceil (\lfloor(n-1)/2 \rfloor)_{t-1}\Big)\\
  && - \Big(d(n-1)_{t-1} + (n-2d)(n-d-1)_{t-1} + d(d)_{t-1}\Big)
\\
&= &\Big(\lfloor(n-1)/2 \rfloor - d \Big) (n-1)_{t-1} -(n-2d)(n-d-1)_{t-1}\\
&&+ \lceil (n+1)/2 \rceil (\lfloor(n-1)/2 \rfloor)_{t-1} - d(d)_{t-1}\\
& > & \Big(\lfloor(n-1)/2 \rfloor - d \Big) (n-1)_{t-1} - \Big((n-2d)(1-d/n)^{t-1}\Big) (n-1)_{t-1}\\
&>& (n-1)_{t-1} \left( \lfloor (n-1)/2 \rfloor - d - (n-2d)e^{-(dt-d)/n}\right) \\
&\geq & (n-1)_{t-1} \left( \lfloor (n-1)/2 \rfloor - d - (n-2d)/e \right)\\
&\geq &0.
\end{eqnarray*}

\section{ Theorem \ref{Erdos_k} and a stability version of it}
In general, it is difficult to calculate the exact value of $N(H_{n,d}, F)$ for a fixed graph $F$.
However, when $F = K_k$, we have $N(H_{n,d}, K_k) = h_k(n,d)k!$. Recall Theorem \ref{Erdos_k}:

 {\em Let $n, d, k$ be integers with $1 \leq d \leq \left \lfloor \frac{n-1}{2} \right \rfloor$ and $k \geq 2$.
If $G$ is a nonhamiltonian graph on $n$ vertices with minimum degree $\delta(G) \geq d$, then
     \[N_k(G) \leq \max\left\{ h_k(n,d),h_k(n, \left \lfloor \frac{n-1}{2} \right \rfloor)\right\}.\]}

{\em Proof of Theorem \ref{Erdos_k}}.
By Theorem~\ref{Posa}, because $G$ is nonhamiltonian, there exists an $r\geq d$ such that $G$ has $r$ vertices of degree at most $r$.
Denote this set of vertices by $D$. Then $N_k(G-D) \leq {n-r \choose k}$, and every vertex in $D$ is contained in at most ${r \choose k-1}$ copies of $K_k$.
Hence $N_k(G) \leq h_k(n,r)$. The theorem follows from the convexity of $h_k(n,x)$. \qed

Our older stability theorem (Theorem \ref{oldma}) also translates  into the the language of cliques, giving a stability theorem for Theorem \ref{Erdos_k}:

\begin{thm}\label{oldma2}
Let $n\geq 3$, and $d\leq \left \lfloor \frac{n-1}{2} \right \rfloor$.
Suppose that $G$ is an $n$-vertex  nonhamiltonian graph  with minimum degree $\delta(G) \geq d$ and there exists a $k \geq 2$ such that
\begin{equation}
    N_k(G) > \max\left\{ h_k(n,d+1), h_k(n, \left\lfloor \frac{n-1}{2}\right\rfloor)\right\}.
\end{equation}
Then $G$ is a subgraph of either $H_{n,d}$ or $K'_{n,d}$.
\end{thm}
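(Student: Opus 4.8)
The plan is to deduce Theorem~\ref{oldma2} directly from Lemmas~\ref{g-d} and~\ref{hnd}, which were stated in the language of $k$-cliques precisely for this purpose, together with the convexity of $h_k(n,\cdot)$ already exploited in the proof of Theorem~\ref{Erdos_k}. First, since adding an edge to $G$ cannot decrease $N_k$ and cannot decrease the minimum degree, and since $K_n$ is hamiltonian for $n\geq 3$, we may pass to an edge-maximal nonhamiltonian supergraph of $G$ on the same vertex set; that is, we may assume $G$ is saturated. Fix a $k\geq 2$ for which the hypothesis holds. In particular $N_k(G)>h_k(n,\lfloor (n-1)/2\rfloor)$, so Lemma~\ref{g-d} supplies an integer $1\leq r\leq \lfloor (n-1)/2\rfloor$ and a set $D$ of $r$ vertices, each of degree at most $r$, such that $G-D$ is complete.

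The next step is to show $r=d=\delta(G)$. Counting $k$-cliques as in the proof of Theorem~\ref{Erdos_k}: there are at most $\binom{n-r}{k}$ copies of $K_k$ inside the clique $G-D$, and every vertex of $D$ lies in at most $\binom{r}{k-1}$ copies of $K_k$, so $N_k(G)\leq \binom{n-r}{k}+r\binom{r}{k-1}=h_k(n,r)$. Since $\delta(G)\geq d$ and $D\neq\emptyset$, any $v\in D$ satisfies $d\leq d_G(v)\leq r$, hence $r\geq d$. On the other hand, $h_k(n,x)$ is convex on $[1,\lfloor (n-1)/2\rfloor]$, so on the subinterval $[d+1,\lfloor (n-1)/2\rfloor]$ it does not exceed $\max\{h_k(n,d+1),h_k(n,\lfloor (n-1)/2\rfloor)\}$; were $r$ in this subinterval we would get $N_k(G)\leq h_k(n,r)\leq\max\{h_k(n,d+1),h_k(n,\lfloor (n-1)/2\rfloor)\}$, contradicting the hypothesis. (The subinterval is empty only when $d=\lfloor (n-1)/2\rfloor$, in which case $r=d$ is immediate from $d\leq r\leq\lfloor (n-1)/2\rfloor$.) Therefore $r=d$, and then $d_G(v)=d$ for every $v\in D$, so $\delta(G)=d=r$.

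Finally, the conditions of Lemma~\ref{g-d} hold and $r=\delta(G)$, so Lemma~\ref{hnd} gives $G=H_{n,d}$ or $G=K'_{n,d}$; undoing the initial passage to a saturated supergraph, the original graph is a subgraph of $H_{n,d}$ or of $K'_{n,d}$. The argument has essentially no obstacle: all the structural work is packaged in Lemmas~\ref{g-d} and~\ref{hnd} (the $k$-clique analogues of Lemmas~6 and~7 of~\cite{oldmain}), and the only mildly delicate point is the convexity step forcing $r=d$ rather than a larger value in the P\'osa range, which is the same convexity of $h_k(n,\cdot)$ used for Theorem~\ref{Erdos_k}.
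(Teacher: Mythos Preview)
Your proof is correct and follows essentially the same approach as the paper's: saturate, apply Lemma~\ref{g-d} to obtain the set $D$ of size $r$, use the convexity of $h_k(n,\cdot)$ to force $r=d$, and then invoke Lemma~\ref{hnd}. You have simply spelled out in more detail the steps (the $r\geq d$ bound, the clique count $N_k(G)\leq h_k(n,r)$, and the passage back to the original graph) that the paper leaves implicit.
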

\begin{proof}
Take an edge-maximum counterexample $G$ (so we may assume $G$ is saturated). By Lemma \ref{g-d}, $G$ has a set
$D$ of $r \leq \lfloor (n-1)/2 \rfloor$ vertices such that $G-D$ is a complete graph.
If $r \geq d+1$, then $N_k(G) \leq \max\left\{ h_k(n,d+1), h_k(n, \left\lfloor \frac{n-1}{2}\right\rfloor)\right\}$. Thus $r = d$, and we may apply Lemma \ref{hnd}.
\end{proof}

\section{Discussion and  proof of Theorem \ref{newma}}

One can try to refine Theorem \ref{oldma} in the following direction: What happens when we consider $n$-vertex nonhamiltonian graphs with
minimum degree at least $d$ and less than $e(n,d+1)$ but
more than $e(n,d+2)$ edges?

Note that for $d< d_0(n)-2$, \[e(n,d) - e(n,d+2) =  2n - 6d - 7,
\]
 which is greater than $n$. Theorem \ref{newma} answers the question above in a more general form---in terms of $s$-cliques instead of edges. In other words,
we classify all $n$-vertex nonhamiltonian graphs with more than $\max\left\{ h_s(n,d+2), h_s(n, \left\lfloor \frac{n-1}{2}\right\rfloor)\right\}$ copies of $K_s$.

 As in Lemma \ref{oldma2}, such $G$ can be a subgraph of $H_{n,d}$ or $K'_{n,d}$. Also, $G$ can be a subgraph of $H_{n,d+1}$ or $K'_{n,d+1}$.
Recall the graphs $H_{n,d}, K'_{n,d}, H'_{n,d}, G'_{n,2},$ and $F_{n,3}$ defined in the first two sections of this paper and the statement of Theorem \ref{oldma}:

\begin{figure}[!ht]
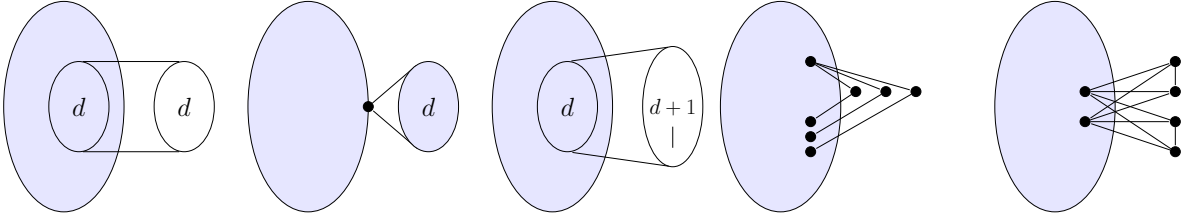
\label{fig7}
  \centering
\includegraphics[scale=.4]{hnd.pdf}\;\;
\includegraphics[scale=.4]{h_nd.pdf}\;\;
\includegraphics[scale=.4]{h__nd.pdf}
      \includegraphics[scale=.4]{ex6.pdf}\;\;\;\;\;\;\;
      \includegraphics[scale=.4]{ex5.pdf}

  \caption{\footnotesize Graphs $H_{n,d}, K'_{n,d}, H'_{n,d}, G'_{n,2}, $ and $F_{n,3}$.}
\end{figure}

{\bf Theorem \ref{newma}}.
{\em Let $n\geq 3$ and $1 \leq d\leq \left \lfloor \frac{n-1}{2} \right \rfloor$.
Suppose that $G$ is an $n$-vertex  nonhamiltonian graph  with minimum degree $\delta(G) \geq d$ such that exists a $k \geq 2$ for which
 $$   N_k(G) > \max\left\{ h_k(n,d+2), h_k(n, \left\lfloor \frac{n-1}{2}\right\rfloor)\right\}.$$
Let $\mathcal{H}_{n,d}:=\{H_{n,d}, H_{n,d+1}, K'_{n,d}, K'_{n,d+1}, H'_{n,d}\}$.

(i) If $d=2$, then  $G$ is a subgraph of  $ G'_{n,2}$ or of a graph in $\mathcal{H}_{n,2}$;\\
(ii) if $d=3$, then  $G$ is a subgraph of  $ F_{n,3}$ or of a graph in $\mathcal{H}_{n,3}$;\\
(iii) if $d=1$ or $4\leq d\leq \left \lfloor \frac{n-1}{2} \right \rfloor$, then  $G$ is a subgraph  of a graph in $\mathcal{H}_{n,d}$.
}

\begin{proof}
Suppose $G$ is a counterexample to Theorem \ref{newma} with the most edges. Then $G$ is saturated. In particular,
 degree condition~\eqref{D1} holds for $G$. So by Lemma \ref{g-d}, there exists
an $d\leq r \leq \lfloor (n-1)/2 \rfloor$ such that $V(G)$ contains a subset $D$ of $r$ vertices of degree at most $r$ and $G - D$ is a complete graph.

If $r \geq d+2$, then because $h_k(n,x)$ is convex, $N_k(G) \leq h_k(n,r) \leq \max\left\{ h_k(n,d+2), h_k(n, \left\lfloor \frac{n-1}{2}\right\rfloor)\right\}$. Therefore either $r =d$ or $r = d+1$.  In the case that $r =d$ (and so $r = \delta(G)$), Lemma \ref{hnd} implies that $G \subseteq H_{n,d}$. So we may assume that $r = d+1$.

If $\delta(G)\geq d+1$, then we simply apply Theorem~\ref{oldma} with $d+1$ in place of $d$ and get $G \subseteq H_{n,d+1}$ or $G \subseteq K'_{n,d+1}$. So, from now on we may assume
\begin{equation}\label{s21}
\delta(G)=d.
\end{equation}

Now~\eqref{s21} implies that our theorem holds for $d=1$, since each graph with minimum degree exactly $1$ is a subgraph of $H_{n,1}$. So, below
$2 \leq d\leq \left \lfloor \frac{n-1}{2} \right \rfloor$.


Let $N := N(D) - D \subseteq V(G) - D$. The next claim will be used many times throughout the proof.

\begin{lem}\label{le0}
(a) If there exists a vertex $v \in D$ such that $d(v) = d+1$, then $N(v) - D = N$.
\\
(b) If there exists a vertex $u \in N$ such that $u$ has at least 2 neighbors in $D$, then $u$ is adjacent to all vertices in $D$.

\end{lem}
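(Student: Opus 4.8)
\textbf{Proof plan for Lemma~\ref{le0}.}
Both parts are immediate consequences of the saturation of $G$, the structural setup, and Ore's condition~\eqref{D1}; the only real content is bookkeeping with degrees. Recall that $D$ is a set of $r=d+1$ vertices of degree at most $r$, that $G-D$ is a complete graph on $n-d-1\geq d$ vertices, and that $N=N(D)-D$.

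\emph{Part (a).} Suppose $v\in D$ has $d(v)=d+1$. First I would observe $N(v)-D\subseteq N$ by definition of $N$, so it suffices to show $N\subseteq N(v)$, i.e.\ every vertex of $N$ is adjacent to $v$. Let $w\in N$ be arbitrary; then $w$ has a neighbor $v'\in D$. I would argue that if $w\notin N(v)$, then $vw\notin E(G)$, and since $G$ is saturated, $G+vw$ has a hamiltonian cycle, which gives a hamiltonian path in $G$ from $v$ to $w$. The plan is to use this path together with the near-completeness of $G-D$ to reroute and produce a hamiltonian cycle in $G$ itself, contradicting nonhamiltonicity: because $G-D$ is complete and $|V(G)-D|=n-d-1$ is much larger than $|D|$, any "almost-hamiltonian" structure can be closed up unless $v$ is essentially a low-degree bottleneck adjacent to exactly $N$. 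More cleanly, I expect the cleanest route is a direct degree count: since $v$ has exactly $d+1$ neighbors and $G+vw$ is hamiltonian while $G$ is not, $v$ and $w$ must be the two ``missing'' endpoints of a hamiltonian path, and a P\'osa-type rotation at $v$ (whose neighborhood has size $d+1=r$) forces every neighbor of $v$ on the path to be non-adjacent to $w$; counting these non-neighbors against $d(w)$ and using $d(w)\le n-1-d(v)=n-d-2$ from~\eqref{D1} pins down $N(v)-D=N$.

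\emph{Part (b).} Suppose $u\in N$ has two neighbors $v_1,v_2\in D$, and suppose for contradiction there is $v_3\in D$ with $uv_3\notin E(G)$. Since $G$ is saturated, $G+uv_3$ has a hamiltonian cycle $C$, hence $G$ has a hamiltonian $u$--$v_3$ path $P$. I would then use that $G-D$ is complete and $u$ has two distinct $D$-neighbors $v_1,v_2$ to reroute $P$ into a genuine hamiltonian cycle of $G$: travel along $P$ until just before reaching $u$, detour through $v_1$ (or $v_2$) using edges inside the complete graph $G-D$ to splice the segments, and close up at $v_3$ using the second neighbor. The key point is that having \emph{two} neighbors of $u$ inside $D$ gives enough flexibility to absorb $v_3$ into the cycle; the completeness of $G-D$ supplies all the connecting edges needed among the non-$D$ vertices. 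This contradiction shows $u$ is adjacent to every vertex of $D$.

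\emph{Main obstacle.} The routine part is clear; the delicate point is making the rerouting arguments precise without a figure, in particular handling the positions of the relevant $D$-vertices along the hamiltonian path and verifying that the spliced cycle actually visits every vertex exactly once. I would organize this by first reducing, via the completeness of $G-D$ and $|V(G)-D|\ge d$, to a small ``core'' configuration on $D\cup\{u\}$ plus two connector vertices of $G-D$, where the case analysis becomes finite and mechanical. I expect the cleanest writeup mirrors the corresponding argument from Lemmas~6--7 of~\cite{oldmain}, adapted to the slightly larger set $D$ of size $d+1$.
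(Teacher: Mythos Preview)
Your plan is far more elaborate than necessary, and the rerouting arguments you sketch are never actually carried out. The paper's proof is two lines of arithmetic using only~\eqref{D1}; no hamiltonian-path surgery is needed at all.

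The key observation you are underusing is that any $u\in V(G)-D$ is already adjacent to all $n-d-2$ other vertices of $G-D$, since $G-D$ is complete. Hence if $u\in N$, then $u$ has at least one neighbor in $D$ and so $d(u)\geq (n-d-2)+1=n-d-1$. For part~(a), if $v\in D$ has $d(v)=d+1$ and $uv\notin E(G)$, then $d(v)+d(u)\geq (d+1)+(n-d-1)=n$, contradicting~\eqref{D1}. For part~(b), if $u\in N$ has two neighbors in $D$ and $uv\notin E(G)$ for some $v\in D$, then $d(u)\geq (n-d-2)+2=n-d$ and $d(v)\geq d$, so $d(v)+d(u)\geq n$, again contradicting~\eqref{D1}. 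That is the entire proof.

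You do gesture at the degree count in part~(a), but then bury it under a P\'osa rotation that is not needed and whose conclusion you do not justify. For part~(b) your rerouting sketch has no content as written: ``two neighbors give enough flexibility'' is an assertion, not an argument, and making it precise would require exactly the kind of case analysis you flag as the main obstacle. Drop all of it and just count degrees.
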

\begin{proof}
If  $v \in D$, $d(v) = d+1$ and some  $u \in N$ is not adjacent to $v$, then $d(v) + d(u) \geq d+1 + (n-d-2) + 1 = n$. A
contradiction to~\eqref{D1} proves (a).

Similarly, if  $u \in N$  has at least 2 neighbors in $D$ but is not adjacent to some $v\in D$, then
 $d(v) + d(u) \geq d + (n-d-2) + 2 = n$, again contradicting~\eqref{D1}.
\end{proof}

Define $S := \{u \in V(G) - D: u \in N(v)\text{ for all } v \in D\}$, $s := s$, and $S':= V(G) - D - S$. By Lemma~\ref{le0}~(b),
each vertex in $S'$ has at most one neighbor in $D$. So,  for each $v\in D$, call the neighbors of $v$ in $S'$ {\em the private neighbors of $v$}.

\medskip
We claim that
\begin{equation}\label{ni}
\mbox{\em $D$ is not  independent. }
\end{equation}
Indeed, assume $D$ is  independent.
If there exists a vertex $v \in D$ with $d(v) = d+1$, then by Lemma~\ref{le0}~(b), $N(v) - D = N$. So, because $D$ is  independent, $G \subseteq H_{n,d+1}$.
Assume now that every vertex $v \in D$ has degree $d$, and let $D = \{v_1, \ldots , v_{d+1}\}$.

If $s\geq d$, then because each $v_i\in D$ has degree $d$, $s = d$ and $N = S$. Then $G \subseteq H_{n,d+1}$. If $s \leq d-2$, then each vertex $v_i \in D$ has at least two private neighbors in $S'$; call these private neighbors $x_{v_i}$ and $y_{v_i}$.  The path $x_{v_1} v_1 y_{v_1} x_{v_2} v_2 y_{v_2}  \ldots  x_{v_{d+1}} v_{d+1} y_{v_{d+1}}$ contains all vertices in $D$ and can be extended to a hamiltonian cycle of $G$, a contradiction.

Finally, suppose $s = d-1$. Then every vertex $v_i \in D$ has exactly one private neighbor.
 Therefore $G = G'_{n,d}$ where $G'_{n,d}$ is composed of a clique $A$ of order $n-d-1$ and an independent set $D = \{v_1, \ldots, v_{d+1}\}$,
and there exists a set $S \subset A$ of size $d-1$ and distinct vertices $z_1, \ldots, z_{d+1}$ such that
for $1 \leq i \leq d+1$, $N(v_i) = S \cup z_i$. Graph $G'_{n,d}$ is illustrated in Fig.~\ref{nf1}.

\begin{figure}[!ht]\label{nf1}
  \centering
    \includegraphics[width=0.25\textwidth]{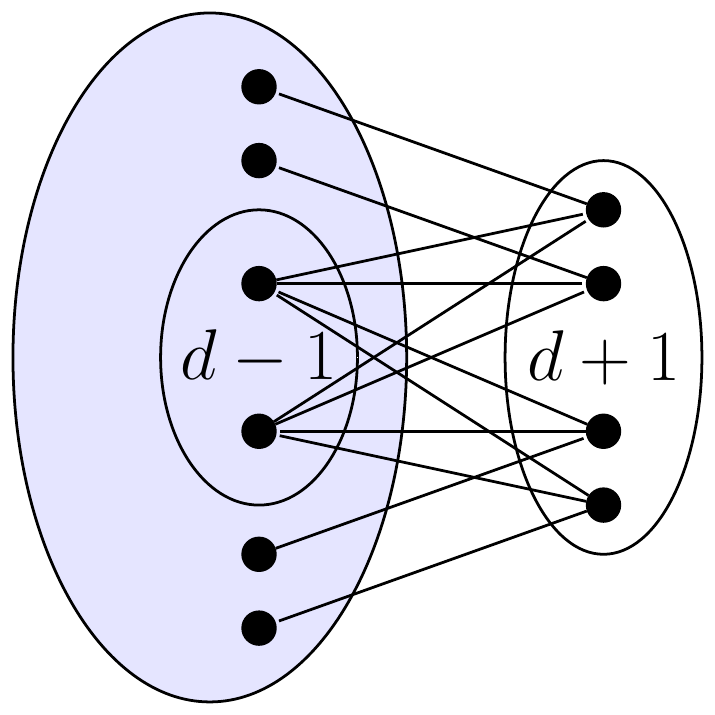}
  \caption{$G'_{n,d}$.}
\end{figure}

For $d=2$, we  conclude that $G\subseteq G'_{n,2}$, as claimed, and for $d \geq 3$, we get a contradiction
since $G'_{n,d}$ is hamiltonian.
This proves~\eqref{ni}.

\bigskip
Call a vertex $v \in D$ \emph{open} if it has at least two private neighbors, \emph{half-open} if it has exactly one private neighbor, and \emph{closed}
 if it has no private neighbors.

We say that {\em paths $P_1,\ldots,P_q$ partition $D$},
if these paths are vertex-disjoint and $V(P_1)\cup\ldots\cup V(P_q)=D$.
The idea of the proof is as follows: because $G - D$ is a complete graph, each path with endpoints in $G - D$ that covers all vertices of
 $D$ can be extended to a hamiltonian cycle of $G$.
So such a path does not exist, which implies that too few paths cannot partition $D$:

\begin{lem}\label{paths1}
If $s\geq 2$ then the minimum number of paths in $G[D]$ partitioning $D$ is at least $s$.
\end{lem}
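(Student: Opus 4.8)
The plan is to argue by contradiction: if $D$ could be partitioned into $q$ paths of $G[D]$ with $q<s$, I will splice these paths together through vertices of $S$ and then absorb all the remaining vertices using the completeness of $G-D$, producing a hamiltonian cycle of $G$ and contradicting the choice of $G$.

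First I would take a partition of $D$ into paths $P_1,\dots,P_q$ with $q<s$, so that $q+1\le s=|S|$, and choose $q+1$ distinct vertices $w_1,\dots,w_{q-1},w,w'$ of $S$. Since by definition every vertex of $S$ is adjacent to all of $D$, I can insert $w_i$ between the last vertex of $P_i$ and the first vertex of $P_{i+1}$ for $1\le i\le q-1$; this glues $P_1,\dots,P_q$ into a single path $Q$ whose two endpoints $x,y$ lie in $D$. Using again that $w,w'\in S$ are adjacent to every vertex of $D$, I prepend $w$ at the $x$-end and append $w'$ at the $y$-end of $Q$, obtaining a path $Q'$ from $w$ to $w'$ that covers $D$ together with exactly the $q+1$ chosen vertices of $S$.

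The key point is that both endpoints of $Q'$ now lie in $V(G)-D$, which induces a complete graph. Hence, enumerating the still-unused vertices of $V(G)-D$ (those in $(S\setminus\{w_1,\dots,w_{q-1},w,w'\})\cup S'$, possibly none) as $z_1,\dots,z_m$, I can close $Q'$ into a cycle by adding the path $w'z_1z_2\cdots z_m w$ (just the edge $w'w$ if $m=0$); all of these added edges lie inside the clique on $V(G)-D$. The result is a hamiltonian cycle of $G$, a contradiction, so the minimum number of paths partitioning $D$ is at least $s$.

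The only delicate point is the bookkeeping: I need $q-1$ vertices of $S$ for the internal splices and $2$ further vertices of $S$ to serve as the endpoints of $Q'$, for a total of $q+1$, and the hypothesis $q<s$ is exactly what guarantees $|S|\ge q+1$; in particular the case $q=1$ uses no internal splices but still requires two anchor vertices, which is why the assumption $s\ge 2$ is needed. I would also note that $q\ge 1$ since $D\ne\emptyset$, and that $m\ge 0$ because $q\le s-1\le |V(G)-D|-2$, so the final closing step is legitimate even when no vertices of $V(G)-D$ remain.
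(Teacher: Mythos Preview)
Your proof is correct and is essentially the same as the paper's: the paper also assumes a partition into $\ell\le s-1$ paths, interleaves them with $\ell+1$ vertices $z_1,\dots,z_{\ell+1}$ of $S$ to get a path $z_1P_1z_2\cdots z_\ell P_\ell z_{\ell+1}$ with endpoints in $V(G)-D$, and closes it to a hamiltonian cycle using that $G-D$ is complete. Your labelling differs (you separate the $q-1$ internal splice vertices from the two endpoint anchors), but the argument is identical.
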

\begin{proof}
Suppose $D$ can be partitioned into $\ell \leq s -1$ paths $P_1, \ldots, P_\ell$ in $G[D]$. Let $S = \{z_1, \ldots, z_s\}$. Then $P = z_1 P_1 z_2 \ldots z_{\ell} P_\ell z_{\ell + 1}$ is a path with endpoints in $V(G) - D$ that covers $D$. Because $V(G) - D$ forms a clique, we can find a $z_1,z_{\ell + 1}$ - path $P'$ in $G - D$ that covers $V(G) - D - \{z_2,\ldots, z_{\ell}\}$. Then $P \cup P'$ is a hamiltonian cycle of $G$, a contradiction.
\end{proof}

Sometimes, to get a contradiction with Lemma~\ref{paths1} we will use our information on vertex degrees in $G[D]$:

\begin{lem}\label{paths} Let $H$ be a graph on $r$ vertices such that for every nonedge $xy$ of $H$, $d(x) + d(y) \geq r-t$ for some $t$.
Then $V(H)$ can be partitioned into a set of at most $t$ paths. In other words, there exist $t$ disjoint paths $P_1, \ldots , P_t$ with $V(H) = \bigcup_{i=1}^t V(P_i)$.
\end{lem}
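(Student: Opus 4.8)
The statement is a Chvátal–Erdős-type fact: if every nonedge $xy$ of $H$ has $d(x)+d(y)\geq |V(H)|-t$, then $V(H)$ can be covered by at most $t$ vertex-disjoint paths. I would prove this by induction on $t$ (for fixed $r=|V(H)|$), or more cleanly by a direct argument adding a set of $t$ new vertices and invoking a Hamiltonicity/traceability criterion.

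The cleanest approach I would take: form an auxiliary graph $H^+$ from $H$ by adding a clique $T$ of $t$ new vertices, each joined to every vertex of $H$ (so $|V(H^+)| = r+t$). I claim $H^+$ has a Hamiltonian path; deleting the $t$ vertices of $T$ from such a path breaks it into at most $t+1$ subpaths — actually, to get exactly $\le t$ paths, I should instead add $t$ new vertices and join each to all of $H$ but make $T$ independent, then argue $H^+$ has a Hamiltonian \emph{cycle}; deleting $T$ from the cycle leaves at most $t$ paths covering $V(H)$. So I need: $H^+$ (on $r+t$ vertices, $T$ independent, fully joined to $H$) is Hamiltonian. For this, check a Chvátal–Erdős or Ore-type condition on $H^+$. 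For a nonedge $xy$ in $H^+$: if both $x,y\in V(H)$, then $d_{H^+}(x)+d_{H^+}(y) = d_H(x)+d_H(y)+2t \geq (r-t)+2t = r+t = |V(H^+)|$, so Ore's condition $d(x)+d(y)\ge |V(H^+)|$ holds; if $x,y\in T$, the same bound must be checked — $d_{H^+}(x)+d_{H^+}(y) = 2r$, and we need $2r \ge r+t$, i.e. $r\ge t$.

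**The main obstacle** is precisely the degenerate/boundary regime where $r$ is small compared to $t$, or where $H$ itself is so sparse that Ore's condition on $H^+$ fails for pairs inside $T$. If $r < t$ the statement is trivial anyway (cover $H$ by $r \le t$ single-vertex paths), so I would dispose of that case first and thereafter assume $r \ge t$, which makes the Ore condition on $H^+$ go through for all nonedges. A second subtlety: Ore's theorem gives a Hamiltonian cycle only when $|V(H^+)| = r+t \ge 3$; the tiny cases $r+t \le 2$ are again trivial to handle by hand. I should also double-check the edge case $t=0$: then the hypothesis forces $H$ complete (every nonedge would need $d(x)+d(y)\ge r$, and in fact no nonedges can be "bad" — one must verify $H$ is either complete or traceable), giving one Hamiltonian path, i.e. $\le 0$ paths only if $H$ is empty, so the statement should be read with $t\ge 1$ in the application; since the lemma is only invoked with $t\ge 2$ (via $s\ge 2$ and Lemma~\ref{paths1}), this is harmless, but I would state the lemma for $t \ge 1$ and note the convention.

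**Write-up skeleton.** First handle $r \le t$ trivially. Then assume $r \ge t \ge 1$; if $r + t \le 2$ check directly, otherwise build $H^+$ by adding an independent set $T$ of $t$ vertices each adjacent to all of $V(H)$, verify via the degree computation above that $H^+$ satisfies Ore's condition, conclude $H^+$ has a Hamiltonian cycle $C$, and observe that $C - T$ is a disjoint union of at most $|T| = t$ paths whose vertex set is exactly $V(H)$ — some of these "paths" may be empty or single vertices, but that only decreases the count, so at most $t$ nonempty paths partition $V(H)$, as claimed. I expect the degree verification and the $r<t$ case split to be the only places needing care; everything else is a direct appeal to Ore's theorem.
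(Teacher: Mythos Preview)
Your proposal is correct and follows essentially the same approach as the paper: add $t$ auxiliary vertices fully joined to $V(H)$, apply Ore's theorem to the augmented graph to obtain a Hamiltonian cycle, and delete the $t$ auxiliary vertices to get at most $t$ paths partitioning $V(H)$. The only difference is that the paper takes the auxiliary set $T$ to be a \emph{clique} rather than an independent set, so there are no nonedges inside $T$ to check and your $r\geq t$ case split becomes unnecessary.
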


\begin{proof}
Construct the graph $H'$ by adding a clique $T$ of size $t$ to $H$ so that every vertex of $T$ is  adjacent to each vertex in $V(H)$. For each nonedge $x,y \in H'$,
$$d_{H'}(x) + d_{H'}(y) \geq (r-t) + t + t = r+t = |V(H')|.$$
 By Ore's theorem, $H'$ has a hamiltonian cycle $C'$.
Then $C' - T$ is a set of at most $t$ paths in $H$ that cover all vertices of $H$.
\end{proof}

The next simple fact will be quite useful.

\begin{lem}\label{open}If $G[D]$ contains an open vertex, then all other vertices are closed.
\end{lem}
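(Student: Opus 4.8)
The claim to prove is Lemma~\ref{open}: if $G[D]$ contains an open vertex, then every other vertex of $D$ is closed. Here ``open'' means a vertex of $D$ with at least two private neighbors in $S'$, and ``closed'' means a vertex of $D$ with no private neighbor. The plan is a direct argument by contradiction using the path-extension philosophy already established: since $G-D$ is a complete graph, any path whose two endpoints lie in $G-D$ and which passes through all of $D$ closes up into a hamiltonian cycle, contradicting nonhamiltonicity. So it suffices to build such a path whenever there are two distinct vertices of $D$, each with at least one private neighbor, one of them having at least two.

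\textbf{Setup.} Suppose for contradiction that $v\in D$ is open and $w\in D$ ($w\ne v$) is not closed, so $w$ is open or half-open; in either case $w$ has a private neighbor. Let $x_v,y_v\in S'$ be two distinct private neighbors of $v$ and let $z_w\in S'$ be a private neighbor of $w$. Recall private neighbors of different vertices of $D$ are automatically distinct (each vertex of $S'$ has at most one neighbor in $D$ by Lemma~\ref{le0}(b)), so $x_v,y_v,z_w$ are three distinct vertices of $G-D$. The remaining $d-1$ vertices of $D$ (all of $D$ except $v,w$) must be threaded onto a path too; for this I would invoke a partition of $D-\{v,w\}$ (or of all of $D$) into few paths in $G[D]$ — indeed Lemma~\ref{paths} applied with the Ore-type degree bound valid inside $D$ gives that $D$ decomposes into a controlled number of paths, and each such subpath has both endpoints available to be attached to distinct vertices of $S$ (using $|S|=s$, with the $s\ge 2$ or small-$s$ cases handled as in the surrounding argument).

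\textbf{Constructing the cycle.} The key move is: start the path at $x_v$, go $x_v\,v\,y_v$, then from $y_v$ jump into $S'$? No — instead route $y_v$ into the clique $G-D$ and continue; meanwhile hang $w$ off the path via $z_w$, i.e. include the segment $\cdots z_w\, w\, (\text{next})\cdots$ where the vertex after $w$ is again pulled from $G-D\cup S$. Since $v$ now contributes two distinct ``exit'' vertices $x_v,y_v$ into $G-D$, and each other non-closed vertex of $D$ contributes one private exit, while $S$ supplies the attachment points for the at most $s$ path-pieces covering $D$, one checks that the total number of anchor vertices in $G-D$ needed does not exceed what is available: $G-D$ being a complete graph on $n-r$ vertices (with $n-r$ large), any two prescribed endpoints can be joined by a Hamilton path of $G-D$ through all the unused vertices. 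Concatenating all the pieces through $D$ with the connecting paths inside the clique yields a hamiltonian cycle of $G$, the desired contradiction. The cleanest formulation: two private neighbors of a single open vertex $v$ let us ``enter and leave'' $v$ without spending an $S$-vertex, effectively reducing by one the number of $S$-anchors required, and this slack is exactly enough to also absorb one more non-closed vertex $w$ of $D$; any second non-closed vertex would then force too few $S$-anchors, i.e. would allow a partition of $D$ into at most $s-1$ extendable paths, contradicting Lemma~\ref{paths1}.

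\textbf{Main obstacle.} The delicate point is the bookkeeping of how many anchor vertices in $S$ versus private neighbors in $S'$ are consumed, and making sure the small cases $s\in\{0,1\}$ and the structure of $G[D]$ (which is \emph{not} assumed complete) are handled uniformly — in particular verifying that the pieces covering $D$ genuinely have their endpoints in $S\cup S'$ with the right multiplicities. I expect to lean on Lemma~\ref{paths1} and Lemma~\ref{paths} to control the number of path-pieces partitioning $D$, and on the largeness of $|V(G)-D|\ge n-\lfloor(n-1)/2\rfloor$ to guarantee the connecting Hamilton paths inside the clique always exist; the rest is assembling the explicit cycle. A careful case split on whether $w$ is open or half-open, and on the value of $s$, finishes the argument, with each case producing either a hamiltonian cycle directly or a partition of $D$ into fewer than $s$ paths, both contradictions.
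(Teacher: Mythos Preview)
Your approach is fundamentally different from the paper's and, as written, has a genuine gap.

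The paper's proof does \emph{not} try to explicitly assemble a hamiltonian cycle from pieces. Instead it exploits saturation: since $v$ is not adjacent to $u'$ (the private neighbor of the other non-closed vertex $u$), adding the edge $vu'$ creates a hamiltonian cycle, hence $G$ has a hamiltonian path $v_1\ldots v_n$ with $v_1=v$, $v_n=u'$. The usual rotation set $V'=\{v_i: vv_{i+1}\in E(G)\}$ is disjoint from $N(u')$, and since one computes $d(v)+d(u')=n-1$ exactly, $V(G)=V'\cup N(u')\cup\{u'\}$. Now the two private neighbors $v',v''$ of $v$ sit on the path; their predecessors lie in $V'$, hence outside $N(u')$. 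But each of $v',v''$ is in the clique $G-D$ and has $v$ as its only neighbor in $D$, so its only neighbor not in $N(u')\cup\{u'\}$ is $v$ itself. Both predecessors would thus equal $v=v_1$, which is impossible. That is the whole argument; no path-partition bookkeeping, no information about $s$, no structure of $G[D]$ is needed.

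Your proposal, by contrast, tries to thread paths through $D$ using anchors in $S$ and private neighbors in $S'$, invoking Lemma~\ref{paths} and Lemma~\ref{paths1}. There are two concrete problems. First, Lemma~\ref{open} is proved \emph{before} Lemmas~\ref{de0}--\ref{de1}; at this point nothing is known about $s$, and indeed $s$ could be $0$ or $1$. Your repeated reliance on ``attaching path-pieces to distinct vertices of $S$'' and on ``the small-$s$ cases handled as in the surrounding argument'' is circular, since the surrounding argument uses Lemma~\ref{open}. Second, you have no usable Ore-type bound inside $G[D]$: the degree of a generic $D$-vertex in $G[D]$ is not yet controlled, so Lemma~\ref{paths} gives you nothing. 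Your final sentence appeals to Lemma~\ref{paths1} to rule out a partition of $D$ into at most $s-1$ ``extendable paths'', but Lemma~\ref{paths1} concerns partitions into paths \emph{in $G[D]$}, not paths extendable to $G-D$; the two notions do not coincide, and in any case you never produced such a partition. What you have is a plan, not a proof; the saturation-and-rotation argument is the clean way through.
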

\begin{proof}
Suppose $G[D]$ has an open vertex $v$ and another open or half-open vertex $u$. Let  $v',v''$ be some
private neighbors of $v$ in $S'$ and $u'$ be a neighbor of $u$ in $S'$.
By the maximality of $G$, graph $G+vu'$ has a hamiltonian cycle. In other
words, $G$ has a hamiltonian path $v_1v_2 \ldots v_n$, where $v_1=v$ and $v_n=u'$. Let $V'=\{v_i:  vv_{i+1}\in E(G)\}$. Since G has no hamiltonian cycle, $V'\cap
N(u')=\emptyset$.

Since  $d(v)+d(u')=n-1$, we have $V(G)=V'\cup N(u')+u'$. Suppose that $v'=v_i$ and $v''=v_j$.
Then $v_{i-1},v_{j-1}\in V'$, and $v_{i-1}, v_{j-1} \notin N(u')$. But among the neighbors of $v_i$ and $v_j$, only $v$
is not adjacent to $u'$, a contradiction.\end{proof}

Now we show that $S$ is non-empty and not too large.

\begin{lem}\label{de0} $ s \geq 1$.\end{lem}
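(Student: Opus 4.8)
The goal is to show that the set $S$ (the vertices of $V(G)-D$ adjacent to \emph{all} of $D$) is non-empty, so suppose for contradiction $s=0$; then every vertex of $V(G)-D$ has at most one neighbor in $D$ by Lemma~\ref{le0}(b), i.e.\ every neighbor of $D$ outside $D$ is a private neighbor of a unique vertex of $D$. I would first exploit~\eqref{ni}: since $D$ is not independent, $G[D]$ has at least one edge. The strategy is to build a long path through $D$ whose two endpoints are private neighbors (vertices of $S'$), and then extend it through the clique $G-D$ to a hamiltonian cycle, contradicting nonhamiltonicity. The main case analysis is driven by Lemma~\ref{open}: either $G[D]$ has an open vertex (and then all other vertices of $D$ are closed), or every vertex of $D$ is half-open or closed.

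\textbf{Key steps.} First, consider the case where $G[D]$ contains an open vertex $v$. By Lemma~\ref{open}, every other vertex of $D$ is closed, so has no private neighbor; since $s=0$, every other vertex $w\in D$ has $N(w)\subseteq D$, hence $d(w)\le |D|-1=r-1=d$, forcing $d(w)=d$ and $N(w)=D-w$, i.e.\ $D-v$ induces a clique $K_d$ and every vertex of $D-v$ is adjacent to all of $D-w$ — in particular each is adjacent to $v$. So $G[D]$ contains a spanning path of $D$ with one endpoint $v$; since $v$ is open it has a private neighbor $x\in S'$, and I need a second private endpoint. Here I would use that the other end of the path is some $w\in D-v$ with $N(w)=D-w$, which has \emph{no} neighbor outside $D$ — this is the obstruction, so instead I route the path so that $v$ is an interior vertex using its private neighbors $x,y$: the path $x\,v\,y$ together with a hamiltonian path of the clique $G[D-v]=K_d$ (which exists since $d\ge 2$) glued appropriately, giving a path from $x$ through all of $D$ ending at a vertex of $D-v$; but that endpoint has no outside neighbor. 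The correct move: take $x\,v\,u_1u_2\cdots u_d$ where $u_1\cdots u_d$ is a hamiltonian path of $K_d=G[D-v]$ — no good either. So in fact I claim the open-vertex case cannot have $s=0$ directly fails; rather one shows $d(v)\ge d+1$ gives, via Lemma~\ref{le0}(a) applied when $d(v)=d+1$, that $N(v)-D=N=\emptyset$, impossible; and if $d(v)\ge d+2$ then $v$ has $\ge 2$ private neighbors and together with the clique $D-v$ one threads a path $x\,u_1\cdots u_d\,v\,y$ from $x\in S'$ to $y\in S'$ covering $D$, extendable through $G-D$ — contradiction.

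\textbf{Remaining case and the main obstacle.} If $G[D]$ has no open vertex, every vertex of $D$ is half-open or closed; closed vertices (with $s=0$) again satisfy $N(w)=D-w$. If \emph{some} vertex is closed, it has no outside neighbor, which is a real obstruction to building a path with endpoints in $S'$; I would argue that then $D$ splits into the closed vertices inducing a clique and the half-open ones, and by counting $k$-cliques (using that $N_k(G)>h_k(n,d+2)$ forces enough structure) or by using Lemma~\ref{paths} with the degree bound in $G[D]$ to get a partition into few paths, then Lemma~\ref{paths1} gives a contradiction once $s=0<$ (number of paths needed). If \emph{every} vertex of $D$ is half-open, each $v_i$ has exactly one private neighbor $z_i$, the $z_i$ distinct; then using a path partition of $G[D]$ into $\ell$ paths (Lemma~\ref{paths}, with $t$ controlled by the Ore-type degree bound inside $G[D]$ coming from~\eqref{D1}), I attach the endpoint private neighbors $z_i$ to the two ends of each path and splice through the clique $G-D$, again producing a hamiltonian cycle. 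The main obstacle I anticipate is handling closed vertices when $s=0$: a closed vertex has all its neighbors inside $D$, so it can never be a path endpoint reaching into $G-D$, and one must show the clique-count hypothesis $N_k(G)>h_k(n,d+2)$ rules this configuration out — concretely, a closed vertex forces $G[D]$ to contain a large clique which, combined with $|D|=d+1$ and the degree cap $r=d+1$, is too restrictive, or else $G$ is already hamiltonian. I expect this clique-counting / extremal bookkeeping to be the technical heart, with the path-threading arguments being routine given Lemmas~\ref{paths}, \ref{paths1}, and~\ref{open}.
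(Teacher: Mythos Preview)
There are genuine gaps in your plan, and the central missing idea is simple: you assume the only way to reach a contradiction is to build a hamiltonian cycle, but the paper uses a second exit as well, namely showing $G\subseteq K'_{n,d}$ (which contradicts that $G$ is a counterexample to Theorem~\ref{newma}). This is exactly what handles your hard cases. If $s=0$ and $D$ has an open vertex $v$, then by Lemma~\ref{open} every other $w\in D$ is closed, hence $N(w)\subseteq D$; so $v$ is the \emph{only} vertex of $D$ with neighbors in $V(G)-D$, which means $v$ is a cut vertex and $G\subseteq K'_{n,d}$. The same conclusion holds when there is at most one half-open vertex and no open vertex. Your attempts to thread a path in the open case cannot work: the endpoints of any $D$-covering path with ends in $V(G)-D$ would have to be neighbors of distinct vertices of $D$, but only $v$ has outside neighbors. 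Concretely, your proposed path $x\,u_1\cdots u_d\,v\,y$ fails since $x$ is a private neighbor of $v$ and hence $xu_1\notin E(G)$. Also note $v\in D$ forces $d(v)\le r=d+1$, so the subcase ``$d(v)\ge d+2$'' you invoke cannot occur (and your reading of Lemma~\ref{le0}(a) as giving $N=\emptyset$ is incorrect: $S=\emptyset$ does not imply $N=\emptyset$).

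For the remaining case (no open vertices and at least two half-open vertices), your plan also goes off track: Lemma~\ref{paths1} requires $s\ge 2$ and is inapplicable when $s=0$, and the clique-count hypothesis $N_k(G)>h_k(n,d+2)$ is a red herring here --- the paper does not use it in this lemma. The right observation is that with $s=0$ every $w\in D$ has at most one neighbor outside $D$, so $\delta(G[D])\ge d-1$. Since $|D|=d+1$, for $d\ge 4$ this makes $G[D]$ hamiltonian-connected (by P\'osa's theorem), yielding a hamiltonian $u,v$-path in $G[D]$ between any two half-open vertices $u,v$; glue on their private neighbors and complete through the clique $G-D$. For $d\in\{2,3\}$ the same conclusion follows by a short direct check (Dirac's theorem gives a hamiltonian cycle in $G[D]$ when $d=3$, and $|D|=3$ when $d=2$). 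This is the clean route the paper takes; your path-partition/bookkeeping machinery is not needed.
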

\begin{proof} Suppose $S=\emptyset$.
If $D$ has an open vertex $v$, then by Lemma~\ref{open}, all other vertices are closed. In this case, $v$ is the only vertex of $D$ with neighbors outside of $D$, and hence $G \subseteq K'_{n,d}$, in which $v$ is the cut vertex.
Also if $D$ has at most one half-open vertex $v$, then similarly $G \subseteq K'_{n,d}$.

So suppose that $D$ contains  no open vertices but has two half-open vertices $u$ and $v$ with private neighbors $z_u$ and $z_v$ respectively.
Then $\delta(G[D])\geq d-1$. By P\' osa's Theorem, if $d\geq 4$, then $G[D]$ has a hamiltonian $v,u$-path. This path together with any
hamiltonian $z_u,z_v$-path in the complete graph $G-D$ and the edges $uz_u$ and $vz_v$ forms a hamiltonian cycle in $G$, a contradiction.

If $d=3$, then by Dirac's Theorem, $G[D]$ has a hamiltonian cycle, i.e. a $4$-cycle, say $C$. If we can choose our half-open $v$ and $u$
 consecutive on $C$, then $C-uv$ is a hamiltonian $v,u$-path in $G[D]$, and we finish as in the previous paragraph. Otherwise, we may
 assume that $C=vxuy$, where $x$ and $y$ are closed. In this case, $d_{G[D]}(x) = d_{G[D]}(y) = 3$, thus $xy\in E(G)$. So we again have
a hamiltonian $v,u$-path, namely $vxyu$, in $G[D]$. Finally, if $d=2$, then $|D| = 3$, and
$G[D]$ is either a $3$-vertex path  whose endpoints are half-open or a $3$-cycle. In both cases, $G[D]$ again has a hamiltonian path whose
ends are half-open.
\end{proof}

\begin{lem}\label{ded} $ s \leq d-3$.\end{lem}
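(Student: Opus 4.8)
The plan is to argue by contradiction: suppose $s \geq d-2$ and show that in every case $G$ has a hamiltonian cycle, contradicting the choice of $G$. The main tool is Lemma~\ref{paths1}, which says that $D$ cannot be covered by fewer than $s$ vertex-disjoint paths in $G[D]$; this forces $G[D]$ to be ``sparse'' in a way that is incompatible with the degree information we have from $\delta(G)=d$ and Ore's condition~\eqref{D1}. Recall that each vertex $v\in D$ satisfies $d_G(v)\geq d$, and its neighbors split into neighbors in $D$, neighbors in $S$ (there are at most $s$ of these), and private neighbors in $S'$. Thus if $v$ is closed, $d_{G[D]}(v)\geq d - s$; if $v$ is half-open, $d_{G[D]}(v)\geq d-1-s$; open vertices have $d_{G[D]}(v)\geq d-2-s$, but by Lemma~\ref{open} an open vertex forces all others closed.

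First I would dispose of the case where $G[D]$ contains an open vertex $v$: then all $|D|-1 = d$ other vertices of $D$ are closed, each with $d_{G[D]}\geq d-s \geq 2$, so $G[D]-v$ has minimum degree at least $d-s$ among its $d$ vertices, and Ore's condition within $G[D]$ (any nonedge $xy$ in $D$ gives $d_{G[D]}(x)+d_{G[D]}(y) \geq 2(d-s) \geq |D-v|$ when $s\leq d-2$, roughly) lets us route a long path through the closed vertices with endpoints being the two private neighbors of $v$; after a small case analysis this yields a path covering $D$ with both ends in $V(G)-D$, extendable to a hamiltonian cycle. Next, the case with no open vertices: here every vertex of $D$ is closed or half-open, so $\delta(G[D]) \geq d-1-s$, and for any nonedge $xy$ in $G[D]$ we get $d_{G[D]}(x)+d_{G[D]}(y)\geq 2(d-1-s)$. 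Since $|D|=d+1$, Lemma~\ref{paths} with $t = (d+1) - 2(d-1-s) = 2s+3-d$ shows $D$ can be covered by at most $2s+3-d$ paths in $G[D]$. When $s \geq d-2$, one checks $2s+3-d \leq s+1$; to beat the bound $s$ of Lemma~\ref{paths1} we must squeeze out one more path, and this is where the half-open vertices help: a half-open vertex can serve as a path endpoint that connects (via its private neighbor and the clique $G-D$) to the rest, effectively reducing the required number of internal paths by one. So the refined argument is: choose the path cover to have as many half-open endpoints as possible, and use at least one of them (or, if $D$ has two or more half-open vertices, two of them as the two ends) to close up through $S'$ and $G-D$.

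The step I expect to be the main obstacle is the boundary subcase $s = d-2$ with few or no half-open vertices — i.e., $D$ almost entirely closed — where the counting $2s+3-d = s+1$ is exactly one too large and Lemma~\ref{paths} alone does not suffice. Here I would need to exploit the structure more carefully: either $G[D]$ has a nonedge (forcing two vertices of degree $\geq d-s = 2$ whose degree sum is $\geq 4 > |D| - $ something, improving the Ore bound and hence the path count via a sharper version of Lemma~\ref{paths}), or $G[D]$ is complete, in which case $G[D]$ is trivially hamiltonian-connected and — since $s = d-2 \geq 0$ forces at least $d-2$ vertices of $S$ but the closed vertices have $d_{G[D]} = d \Rightarrow$ they have no neighbors in $S$, contradicting that $S$-vertices are adjacent to all of $D$ unless $S=\emptyset$, handled by Lemma~\ref{de0}. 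Tracking these degree bookkeeping constraints (every $z\in S$ contributes to $d_{G[D]}(v)$ being strictly less than $d$ for closed $v$) is what ultimately rules out the extremal configuration and completes the contradiction. A short case split on small values $d\in\{2,3\}$ may be needed separately, mirroring the handling in Lemma~\ref{de0}, but those are finite checks rather than a genuine obstacle.
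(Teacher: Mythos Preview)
Your strategy has a fundamental gap: you assume the contradiction in every case is that $G$ is hamiltonian. It is not. In several subcases of $s\in\{d,d-1,d-2\}$ the graph $G$ is genuinely nonhamiltonian, and the contradiction is that $G$ turns out to be a subgraph of one of the allowed extremal graphs $H_{n,d+1}$, $K'_{n,d}$, $K'_{n,d+1}$, $H'_{n,d}$, $G'_{n,2}$, or $F_{n,3}$ (recall $G$ was chosen as a counterexample to Theorem~\ref{newma}). For instance, take $s=d$ with a vertex $v^*\in D$ of degree $d+1$ that is closed. Then every $u\in D- v^*$ has all $d$ of its neighbors in $S$, so $G[D]$ has maximum degree $1$; if $G[D]$ has exactly one edge you obtain $G\subseteq H'_{n,d}$, and this holds for \emph{every} $d$, not just $d\in\{2,3\}$. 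Your plan to ``show $G$ has a hamiltonian cycle'' cannot succeed here. Similarly, $s=d-2$, $d=3$ with $G[D]$ two disjoint edges gives exactly $G=F_{n,3}$. You must therefore interleave structural identifications with hamiltonicity arguments, not treat the small cases as mere finite checks.

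Second, your quantitative engine is too weak for the range $s\geq d-2$. Using only the lower bound $\delta(G[D])\geq d-1-s$, Lemma~\ref{paths} gives a cover of $D$ by at most $2s+3-d$ paths, which for $s=d-2,d-1,d$ is $d-1,d+1,d+3$ respectively --- never below $s$, so Lemma~\ref{paths1} is never contradicted. The missing ingredient is the \emph{upper} bound $d_G(v)\leq r=d+1$ for $v\in D$: combined with the lower bound it pins down $d_{G[D]}(v)$ to one or two values, so $G[D]$ is a matching (when $s=d-1$), a union of paths and cycles (when $s=d-2$), etc. The paper exploits this exact structure case by case, explicitly constructing spanning paths of $D$ with endpoints in $N(D)$ (often routing through private neighbors in $S'$) rather than invoking Lemma~\ref{paths} abstractly. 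Your final paragraph also contains a confusion: for a closed vertex $v$ one has $d_{G[D]}(v)=d_G(v)-s$, not $d_{G[D]}(v)=d$, so the argument that ``closed vertices have no neighbors in $S$'' is backwards --- by definition of $S$, every vertex of $D$ is adjacent to all of $S$.
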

\begin{proof} Since by~\eqref{s21}, $\delta(G)=d$, we have $s\leq d$. Suppose $s\in \{d-2,d-1,d\}$.

{\bf Case 1}: All vertices of $D$ have degree $d$.

{\em Case 1.1:} $s=d$.
 Then $G \subseteq H_{n,d+1}$. 

{\em Case 1.2:} $s=d-1$.
In this case, each vertex in graph $G[D]$ has degree $0$ or $1$.
By~\eqref{ni}, $G[D]$ induces a non-empty matching, possibly with some isolated vertices. Let $m$ denote the number of edges in $G[D]$.

If $m\geq 3$, then the number of components in $G[D]$ is less than $s$, contradicting Lemma~\ref{paths1}.
Suppose now  $m=2$, and the edges in the matching are $x_1y_1$ and $x_2y_2$. Then $d\geq 3$. If $d=3$, then $D=\{x_1,x_2,y_1,y_2\}$
and $G = F_{n,3}$ (see Fig~3 (right)). If $d\geq 4$, then
$G[D]$ has an isolated vertex, say $x_3$. This $x_3$ has a private neighbor
 $w\in S'$. Then $|S+w|=d$ which is
more than the number of components of $G[D]$ and we can construct a path from $w$ to $S$ visiting all components of $G[D]$.

Finally, suppose $G[D]$ has exactly one edge, say  $x_1y_1$. Recall that $d\geq 2$. Graph $G[D]$ has $d-1$ isolated vertices,
say $x_2,\ldots,x_d$. Each of $x_i$ for $2\leq i\leq d$
has a private neighbor $u_i$ in $S'$. Let $S=\{z_1,\ldots,z_{d-1}\}$.
 If $d=2$, then $S=\{z_1\}$, $N(D)=\{z_1,u_2\}$ and hence $G\subset H'_{n,2}$. So in this case the theorem holds for $G$.
If $d\geq 3$, then $G$ contains a path $u_dx_dz_{d-1}x_{d-1}z_{d-2}x_{d-2}\ldots z_2x_1y_1z_1x_2u_2$
 from $u_d$ to $u_2$ that covers $D$.

{\em Case 1.3:} $s=d-2$. Since $s\geq 1$, $d\geq 3$.
 Every vertex in $G[D]$ has degree at most $2$, i.e., $G[D]$ is a union of paths, isolated vertices, and cycles.
 Each isolated vertex has at least $2$ private neighbors in $S'$. Each endpoint of a path in $G[D]$  has one private neighbor in $S'$.
 Thus we can find disjoint paths from $S'$ to $S'$ that cover all isolated vertices and paths in $G[D]$ and all are disjoint from $S$.
Hence if the number $c$ of cycles in $G[D]$ is less than $d-2$, then we have a set of disjoint paths from $V(G)-D$ to $V(G)-D$ that
cover $D$ (and this set can be extended to a hamiltonian cycle in $G$). Since each cycle has at least $3$ vertices and $|D|=d+1$,
if $c\geq d-2$,
then  $ (d+1)/3 \geq d-2$, which is possible only when $d<4$, i.e. $d=3$. Moreover, then  $G[D]=C_3\cup K_1$ and $S=N$ is a single vertex.
But then $G=K'_{n,3}$.

{\bf Case 2}: There exists a vertex $v^* \in D$ with $d(v^*) = d+1$.
By Lemma~\ref{le0}~(b), $N = N(v^*) - D$, and so $G$ has at most one open or half-open vertex. Furthermore,
\begin{equation}\label{s11}
\parbox{5.4in}{\em
if $G$ has an open or half-open vertex, then it is $v^*$, and  by  Lemma~\ref{le0}, there are no other vertices of degree $d+1$.
}
\end{equation}

{\em Case 2.1:} $s = d $.  If $v^*$ is not closed, then it has a private neighbor $x \in S'$, and the neighborhood of each  other vertex of $D$ is
exactly $S$. In this case,  there exists a path from $x$ to $S$ that covers $D$. If $v^*$ is closed (i.e., $N = S$), then $G[D]$ has maximum degree $1$.
Therefore $G[D]$ is a matching with at least one edge (coming from $v^*$) plus some isolated vertices. If  this matching
has at least $2$ edges, then the number of components in $G[D]$ is less than $s$, contradicting Lemma~\ref{paths1}.
If $G[D]$ has exactly one edge, then $G \subseteq H'_{n,d}$.

{\em Case 2.2:} $s = d-1$.
  If $v^*$ is open, then $d_{G[D]}(v^*) = 0$ and  by~\eqref{s11}, each other vertex in $D$  has exactly one neighbor in $D$.
In particular, $d$ is even.  Therefore  $G[D-v^*]$ has $ d/2 $ components.
When $d \geq 3$ and $d$ is even, $ d/2  \leq s-1$ and we can find a path from $S$ to $S$ that covers $D - v^*$,
 and extend this path using two neighbors of $v^*$ in $S'$ to a path from $V(G)-D$ to $V(G)-D$ covering $D$.
Suppose $d=2$, $D=\{v^*,x,y\}$ and $S=\{z\}$. Then $z$ is a cut vertex separating $\{x,y\}$ from the rest of $G$, and hence $G\subseteq K'_{n,2}$.

If  $v^*$ is half-open, then by~\eqref{s11}, each other vertex in $D$ is closed and hence has exactly one neighbor in $D$. Let
  $x \in S'$ be the private neighbor of $v^*$. Then $G[D]$ is $1$-regular and therefore has exactly $ (d+1)/2 $ components, in particular,
  $d$ is odd.
 If $d \geq 2$ and is odd, then $(d+1)/2  \leq d-1 = s$, and so we can find a path from $x$ to $S$ that covers $D$.

 Finally, if $v^*$ is closed, then by~\eqref{s11}, every vertex of $G[D]$ is closed and has  degree $1$ or $2$, and $v^*$ has degree $2$ in $G[D]$.
 Then $G[D]$ has at most $\lfloor d/2\rfloor$ components, which is less than $s$ when $d \geq 3$. If $d = 2$, then $s=1$ and the unique vertex $z$
in $S$ is a cut vertex separating $D$ from the rest of $G$. This means $G\subseteq K'_{n,3}$.


{\em Case 2.3:} $s = d-2$. Since $s\geq 1$, $d\geq 3$.  If $v^*$ is open, then $d_{G[D]}(v^*) = 1$ and
by~\eqref{s11}, each other vertex in $D$  is closed and has exactly two neighbors in $D$.
 But this is not possible, since the degree sum of the vertices in  $G[D]$ must be even.
 If $v^*$ is half-open with a neighbor $x \in S'$, then $G[D]$ is $2$-regular. Thus $G[D]$ is a union of cycles
and has at most $\lfloor (d+1)/3 \rfloor$ components. When $d \geq 4$, this is less than $s$,  contradicting Lemma~\ref{paths1}.
If $d=3$, then $s=1$ and the unique vertex $z$
in $S$ is a cut vertex separating $D$ from the rest of $G$. This means $G\subseteq K'_{n,4}$.

If $v^*$ is closed, then $d_{G[D]}(v^*) = 3$ and  $\delta(G[D])\geq 2$.
So, for any  vertices $x,y$ in $G[D]$, \[d_{G[D]}(x) + d_{G[D]}(y) \geq 4 \geq (d+1) - (d-2-1) = |V(G[D])| - (s-1).\]
By Lemma~\ref{paths}, if $s\geq 2$, then  we can partition $G[D]$ into $s-1$ paths $P_1,..., P_{s-1}$.
This would contradict Lemma~\ref{paths1}. So suppose $s=1$ and $d=3$.
Then as in the previous paragraph, $G\subseteq K'_{n,4}$.
\end{proof}

Next we will show that we cannot have $2 \leq s \leq d-3$.
\begin{lem}\label{de1} $s = 1$.
\end{lem}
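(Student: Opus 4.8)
The plan is to argue by contradiction: suppose $2\le s\le d-3$ (by Lemma~\ref{ded} we already know $s\le d-3$, and by Lemma~\ref{de0} we know $s\ge 1$, so assume $s\ge 2$). The strategy is the same as in the proof of Lemma~\ref{ded}: since $G-D$ is complete, any system of vertex-disjoint paths whose endpoints lie in $V(G)-D$ and whose union covers $D$ extends to a hamiltonian cycle; hence by Lemma~\ref{paths1} the minimum number of paths in $G[D]$ partitioning $D$ must be at least $s$. I will derive a contradiction to this by exhibiting a partition of $D$ into fewer than $s$ paths (or into paths/pieces that can be linked through $S$ and through the private neighbors in $S'$ using fewer than $s$ ``external'' connections). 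The key quantitative input is a lower bound on $\delta(G[D])$: since $\delta(G)=d$ by~\eqref{s21} and each vertex of $D$ has at most $s$ neighbours in $S$ and, by Lemma~\ref{le0}(b), at most one private neighbour in $S'$, every open vertex has $G[D]$-degree $\ge d-s-2$, every half-open vertex has $G[D]$-degree $\ge d-s-1$, and every closed vertex has $G[D]$-degree $\ge d-s$. Combined with the possibility of a single vertex $v^*$ of degree $d+1$ (Case 2 as in Lemma~\ref{ded}), this forces $G[D]$ to have very large minimum degree relative to $|D|=d+1$ whenever $s$ is small.

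The main steps, in order, are as follows. First I would split into the case where every vertex of $D$ has degree exactly $d$ and the case where some $v^*\in D$ has degree $d+1$, exactly as in Lemma~\ref{ded}, using Lemma~\ref{le0} and~\eqref{s11} to control the neighbourhoods. Second, I would use Lemma~\ref{open}: if $G[D]$ has an open vertex then all other vertices of $D$ are closed, so all but one vertex of $D$ has $G[D]$-degree $\ge d-s \ge 3$; then $\delta(G[D])\ge d-s-2$ overall and one checks $d_{G[D]}(x)+d_{G[D]}(y)\ge (d+1)-(s-1)=|V(G[D])|-(s-1)$ for every nonedge $xy$ (using $s\le d-3$, so $d-s\ge 3$, generously), whence Lemma~\ref{paths} partitions $G[D]$ into $s-1$ paths, contradicting Lemma~\ref{paths1}. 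The open vertex itself contributes at most two more path-ends, but these can be absorbed using its two private neighbours in $S'$, so the number of genuinely needed external connections stays below $s$. Third, if $G[D]$ has no open vertex, then every vertex of $D$ is closed or half-open; if it has at most one half-open vertex we are again in the $K'_{n,d}$-type situation (the half-open or a designated vertex is essentially a cut vertex), so assume $\ge 2$ half-open vertices; then $\delta(G[D])\ge d-s-1\ge 2$ and for every nonedge $d_{G[D]}(x)+d_{G[D]}(y)\ge 2(d-s-1)\ge (d+1)-(s-1)$ precisely when $d\ge s+?$ — here I would verify the arithmetic carefully — so Lemma~\ref{paths} gives a partition of $G[D]$ into at most $s-1$ paths (choosing the two half-open vertices, or their private neighbours, as the outermost endpoints so the path system links up through $S'$), again contradicting Lemma~\ref{paths1}. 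The Case~2 subcase with $v^*$ of degree $d+1$ is handled identically after removing $v^*$ and its (at most two) private neighbours from the bookkeeping, reducing $|D|$ by one and $s$-budget appropriately.

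I expect the main obstacle to be the boundary arithmetic: checking that the degree-sum inequality $d_{G[D]}(x)+d_{G[D]}(y)\ge |V(G[D])|-(s-1)$ required by Lemma~\ref{paths} actually holds across all the sub-cases (all-degree-$d$ versus the $v^*$ exception; open versus half-open versus closed; and the small-$d$ corner cases $d=4,5$ where $s-3\le d$ is tight), and ensuring that the at-most-two ``loose ends'' created by an open vertex $v^*$ together with the $S'$-private neighbours can always be routed to give a path system from $V(G)-D$ to $V(G)-D$ using strictly fewer than $s$ vertices of $S$. A secondary obstacle is making sure we have correctly invoked Lemma~\ref{paths1} (which requires $s\ge 2$, available here) rather than needing an independent argument; once the path count drops below $s$, Lemma~\ref{paths1} delivers the contradiction immediately. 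Since $s\le d-3$ forces $d\ge 5$ (as $s\ge 2$), the degree lower bounds $d-s\ge 3$ are comfortably large and the inequalities should all close with room to spare, so I anticipate the proof is a somewhat lengthy but routine case analysis rather than requiring a new idea.
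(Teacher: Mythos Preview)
Your overall strategy matches the paper's --- assume $2\le s\le d-3$, use Lemma~\ref{open} to isolate the open-vertex case, and then feed degree-sum bounds on $G[D]$ into Lemma~\ref{paths} to contradict Lemma~\ref{paths1} --- but two of your steps do not go through as written.

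\textbf{Open-vertex case.} You claim $\delta(G[D])\ge d-s-2$ and then that the degree-sum hypothesis of Lemma~\ref{paths} holds on $G[D]$. This is false: an open vertex $v$ may have arbitrarily many private neighbours in $S'$, so $d_{G[D]}(v)$ can be $0$. Then for any non-neighbour $y$ (closed, so $d_{G[D]}(y)\ge d-s$) you get $d_{G[D]}(v)+d_{G[D]}(y)\ge d-s$, which is strictly less than the required $(d+1)-(s-1)=d-s+2$. The paper fixes this by \emph{deleting} $v$ first: in $G':=G[D]-v$ every remaining vertex is closed, so $\delta(G')\ge d-s-1=k-1$ on $|V(G')|=d$ vertices; since $k\ge 3$ one has $2(k-1)\ge k+1=d-(s-1)$, Lemma~\ref{paths} partitions $G'$ into $s-1$ paths, these thread through $S$, and then $v$ is spliced in via two of its private neighbours. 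Your parenthetical about absorbing ``two more path-ends'' is the right intuition, but it must be implemented by applying Lemma~\ref{paths} to $G[D]-v$, not to $G[D]$.

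\textbf{The boundary $s=d-3$.} With no open vertex you use $\delta(G[D])\ge d-s-1$ and need $2(d-s-1)\ge (d+1)-(s-1)$, i.e.\ $d-s\ge 4$. This fails exactly when $s=d-3$ (equivalently $k=3$), which is a case you must cover. Your proposed remedy --- ``choosing the two half-open vertices as outermost endpoints'' --- is not something Lemma~\ref{paths} lets you control, and having at most one half-open vertex does \emph{not} put you in a $K'_{n,d}$-type situation (that reasoning only applied when $s=0$ in Lemma~\ref{de0}). The paper handles $s=d-3$ with a genuine extra idea: if there are at least two half-open vertices, adjoin an auxiliary vertex $w$ to $G[D]$ adjacent to a set $D^-$ of at least three half-open (or half-open plus one closed) vertices; the resulting graph on $d+2$ vertices has minimum degree $\ge 3$, so degree sums are $\ge 6=(d+2)-(s-1)$, Lemma~\ref{paths} gives $s-1$ paths, and the path through $w$ is rerouted by replacing $w$ with a private neighbour of one of its half-open neighbours. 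Without this (or an equivalent device) the $s=d-3$ case is a genuine gap in your plan.

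Finally, the initial split into ``all degrees $d$'' versus ``some $v^*$ of degree $d+1$'' is unnecessary here; the paper's proof of Lemma~\ref{de1} never invokes it, and carrying it through would only duplicate the casework without gain.
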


\begin{proof} Suppose $s = d-k$ where $3 \leq k \leq d-2$.

{\bf Case 1:} $G[D]$ has an open vertex $v$.
By Lemma~\ref{open}, every other vertex in $D$ is closed. Let $G' = G[D] - v$.
Then $\delta(G') \geq k-1$ and $|V(G')| = d$.
In particular, for  any   $x,y \in D-v$,
$$d_{G'}(x) + d_{G'}(y) \geq 2k-2 \geq k+1 = d - (d-k-1) = |V(G')| - (s-1).$$
By Lemma~\ref{paths}, we can find a path from $S$ to $S$ in $G$ containing all of $V(G')$.
Because $v$ is open, this path can be extended to a path from $V(G)-D$ to $V(G)-D$ including $v$,
and then extended to a hamiltonian cycle of $G$.

{\bf Case 2:} $D$ has no open vertices and $4 \leq k \leq d-2$.
Then $\delta(G[D]) \geq k-1$ and again for any   $x,y \in D$, $d_{G[D]}(x) + d_{G[D]}(y) \geq 2k-2$.
For $k\geq 4$, $2k-2 \geq k+2 = (d+1) - (d-k-1) = |D| - (s-1)$.
Since $k\leq d-2$,
by Lemma~\ref{paths}, $G[D]$ can be partitioned into $s-1$ paths, contradicting Lemma~\ref{paths1}.

{\bf Case 3}: $D$ has no open vertices and $s = d-3\geq 2$.
If there is at most one half-open vertex, then for any nonadjacent vertices $x,y \in D$, $d_{G[D]}(x) + d_{G[D]}(y) \geq 2 + 3 = 5 \geq (d+1) - (d-3-1)$,
and we are done as in Case 2.

So we may assume $G$ has at least 2 half-open vertices. 
Let $D'$ be the set of half-open vertices in $D$. If $D'\neq D$, let $v^*\in D-D'$.
Define a subset $D^-$  as follows: If $|D'|\geq 3$, then let $D^-=D'$, otherwise, let $D^-=D'+v^*$.
Let $G'$ be the  graph obtained from $G[D]$ by adding a new vertex $w$ adjacent to all vertices in $D^-$.
Then $|V(G')| = d+2$ and $\delta(G') \geq 3$.
In particular, for any $x,y \in V(G'), d_{G'}(x) + d_{G'}(y) \geq 6 \geq (d+2) - (d-3 - 1) = |V(G')|-(s-1)$.
By Lemma~\ref{paths}, $V(G')$ can be partitioned into $s-1$ disjoint paths $P_1, \ldots , P_{s-1}$.
We may assume that $w \in P_1$. If $w$ is an endpoint of $P_1$, then $D$ can also be partitioned into $s-1$
 disjoint paths $P_1 - w, P_2, \ldots , P_{s-1}$ in $G[D]$, a contradiction to Lemma~\ref{paths1}.

Otherwise, let $P_1 = x_1, \ldots , x_{i-1}, x_i, x_{i+1}, \ldots , x_k$ where $x_i = w$.
Since every vertex in $(D^-)-v^*$ is half-open and $N_{G'}(w)=D^-$,  we may assume that
 $x_{i-1}$ is half-open  and thus has a neighbor $y \in S'$. Let $S = \{z_1, \ldots,  z_{d-3}\}$. Then
\[y x_{i-1} x_{i-2} \ldots x_1 z_1 x_{i+1} \ldots x_k z_2 P_2 z_3  \ldots  z_{d-4} P_{d-4} z_{d-3}\] is a path in $G$ with endpoints in $V(G) - D$ that covers $D$.
\end{proof}

\medskip
Now we may finish the proof of Theorem \ref{newma}. By Lemmas~\ref{de0}--\ref{de1},
 $s = 1$, say, $S=\{z_1\}$. Furthermore, by Lemma~\ref{ded},
 \begin{equation}\label{pos}
 d\geq 3+s=4.
 \end{equation}

{\bf Case 1}: $D$ has an open vertex $v$. Then by Lemma~\ref{open}, every other vertex of $D$ is closed.
Since $s=1$, each $u\in D-v$  has degree  $d-1$
in $G[D]$. If $v$ has no neighbors in $D$, then $G[D] - v$ is a clique of order $d$, and $G \subseteq K'_{n,d}$. Otherwise,
since  $d\geq 4$, by Dirac's Theorem, $G[D]-v$ has a hamiltonian cycle, say $C$. Using $C$ and an edge from $v$ to $C$, we obtain
a hamiltonian path $P$ in $G[D]$ starting with $v$. Let $v' \in S'$ be a neighbor of $v$. Then $v'Pz_1$ is a path from $S'$ to $S$ that covers $D$, a contradiction.

{\bf Case 2}: $D$ has a half-open vertex but no open vertices. It is enough to prove that
\begin{equation}\label{hp}
\mbox{ $G[D]$ has
  a hamiltonian path $P$ starting with a half-open vertex $v$,}
 \end{equation}
since such a $P$ can be extended to a hamiltonian cycle in $G$ through $z_1$ and the private neighbor of $v$.
 If $d\geq 5$, then for any  $x,y\in D$,
 $$d_{G[D]}(x) + d_{G[D]}(y) \geq d-2 + d - 2 =2d-4\geq d+1 = |V(G[D])|.$$
 Hence by Ore's Theorem, $G[D]$ has a hamiltonian cycle, and hence~\eqref{hp} holds.

If $d<5$ then
by~\eqref{pos},
 $d=4$.  So $G[D]$ has $5$ vertices and minimum degree at least $2$.
By Lemma~\ref{paths}, we can find a hamiltonian path $P$ of $G[D]$, say $v_1v_2v_3v_4v_5$.
If at least one of $v_1,v_5$ is half-open or $v_1v_5\in E(G)$, then~\eqref{hp} holds. Otherwise,
each of $v_1,v_5$ has $3$ neighbors in $D$, which means $N(v_1)\cap D=N(v_5)\cap D=\{v_2,v_3,v_4\}$.
But then $G[D]$ has hamiltonian cycle $v_1v_2v_5v_4v_3v_1$, and again~\eqref{hp} holds.

{\bf Case 3}: All vertices in $D$ are closed.
Then $G \subseteq K'_{n,d+1}$, a contradiction. This proves the theorem.
%
%
%
%
%
\end{proof}

\section{A comment and a question}
\begin{itemize}
\item It was shown in Section~\ref{gen} that the right order of magnitude of $n_0(d,t)$ in Theorem \ref{sub} when $d=O(t)$
is $dt$. We can also show this when $d=O(t^{3/2})$. It could be that $dt$ is the right order of magnitude of $n_0(d,t)$ for all
$d$ and $t$.

\item
Is there a graph $F$ and positive integers $d$, $n$ with $n < n_0(d,t)$ and $d \leq \lfloor (n-1)/2 \rfloor$ such that for some $n$-vertex nonhamiltonian graph $G$ with
minimum degree at least $d$,
$$N(G,F)>\max\{N(H_{n,d}),F),N(K'_{n,d},F),N(H_{n,\lfloor (n-1)/2\rfloor},F)\}?$$

\end{itemize}

\end{document}